\newcommand{\C}{{\mathbb C}}
\newcommand{\G}{{\mathcal G}}
\newcommand{\be}{\begin{equation}}
\newcommand{\ee}{\end{equation}} 
\newcommand{\old}[1]{}
\newcommand{\eps}{\epsilon}
\newcommand{\R}{{\mathbb R}}
\newtheorem{theorem}{Theorem}
\newtheorem{thm}[theorem]{Theorem}
\newtheorem{conjecture}{Conjecture}
\newtheorem{corollary}[theorem]{Corollary}
\title{Families of convex tilings}
\author{Richard Kenyon\footnote{Department of Mathematics, Yale University, New Haven; richard.kenyon at yale.edu.}}
\begin{document}

\date{}
\maketitle
\abstract{
We study tilings of polygons $R$ with arbitrary convex polygonal tiles. 
Such tilings come in continuous families obtained by moving tile edges parallel to themselves (keeping edge directions fixed). 
We study how the tile shapes and areas change in these families. In particular we show that if $R$ is convex, the tile shapes can be arbitrarily prescribed
(up to homothety). We also show that the tile areas and tile ``orientations'' determine the tiling.
We associate to a tiling an underlying bipartite planar graph $\G$ and its corresponding
Kasteleyn matrix $K$.
If $\G$ has quadrilateral faces, we show that $K$ is the differential of the map from edge intercepts to tile areas,
and extract some geometric and probabilistic consequences. 
}

\section{Introduction}

In 1903 Dehn \cite{Dehn} showed that an $a\times b$ rectangle can be tiled with squares (of not necessarily the same size) if and only if $a/b$ is rational. Tutte \cite{Tutte} gave an analogous result for tilings
of convex regions with equilateral triangles.
These results were generalized in \cite{Kenyon.dirichlet} where we gave a correspondence between tilings with ``horizontal trapezoids" (trapezoids with two horizontal edges) and harmonic functions on planar Markov chains.
In a further generalization, with Scott Sheffield \cite{KS} we discussed tilings with general convex polygons, relating them
to Kasteleyn theory and the Matrix-Tree Theorem.

In this paper we study \emph{families} of tilings of polygonal regions with convex polygons having fixed edge directions. 
We obtain results regarding the space of possible shapes of tiles and their possible areas. 
We also explore the connections between tilings and the dimer model on the underlying bipartite graph. 

Our first result is illustrated in Figure \ref{tritiling}.
Given a convex tiling of a convex polygon $R$, and for each tile $t$, a new copy $t'$ of $t$ with the same edge directions but of possibly different shape
we show (Theorem \ref{tilingtotiling}) that there is a unique, up to homothety,
``combinatorially equivalent'' tiling of a new convex polygon $R'$, with homothetic copies of the new tiles.
A similar statement holds when $R$ is not convex. 
This result is analogous to a theorem about packings of strictly convex bodies due to Schramm \cite{Schramm}. 
(See also the remarks after Corollary \ref{globalc}.)
\begin{figure}
\begin{center}\includegraphics[width=2.in]{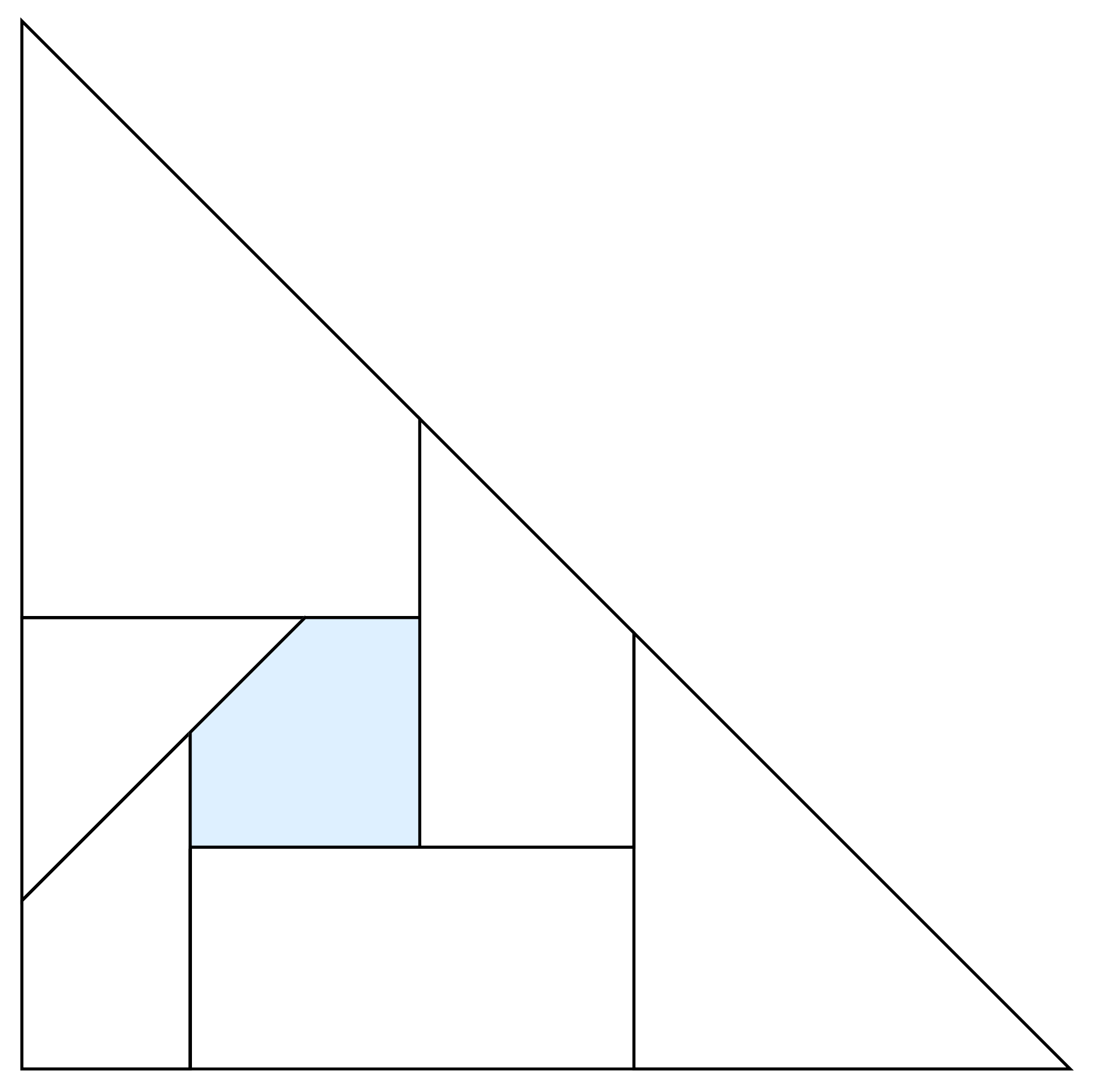}\hskip1cm\includegraphics[width=2.in]{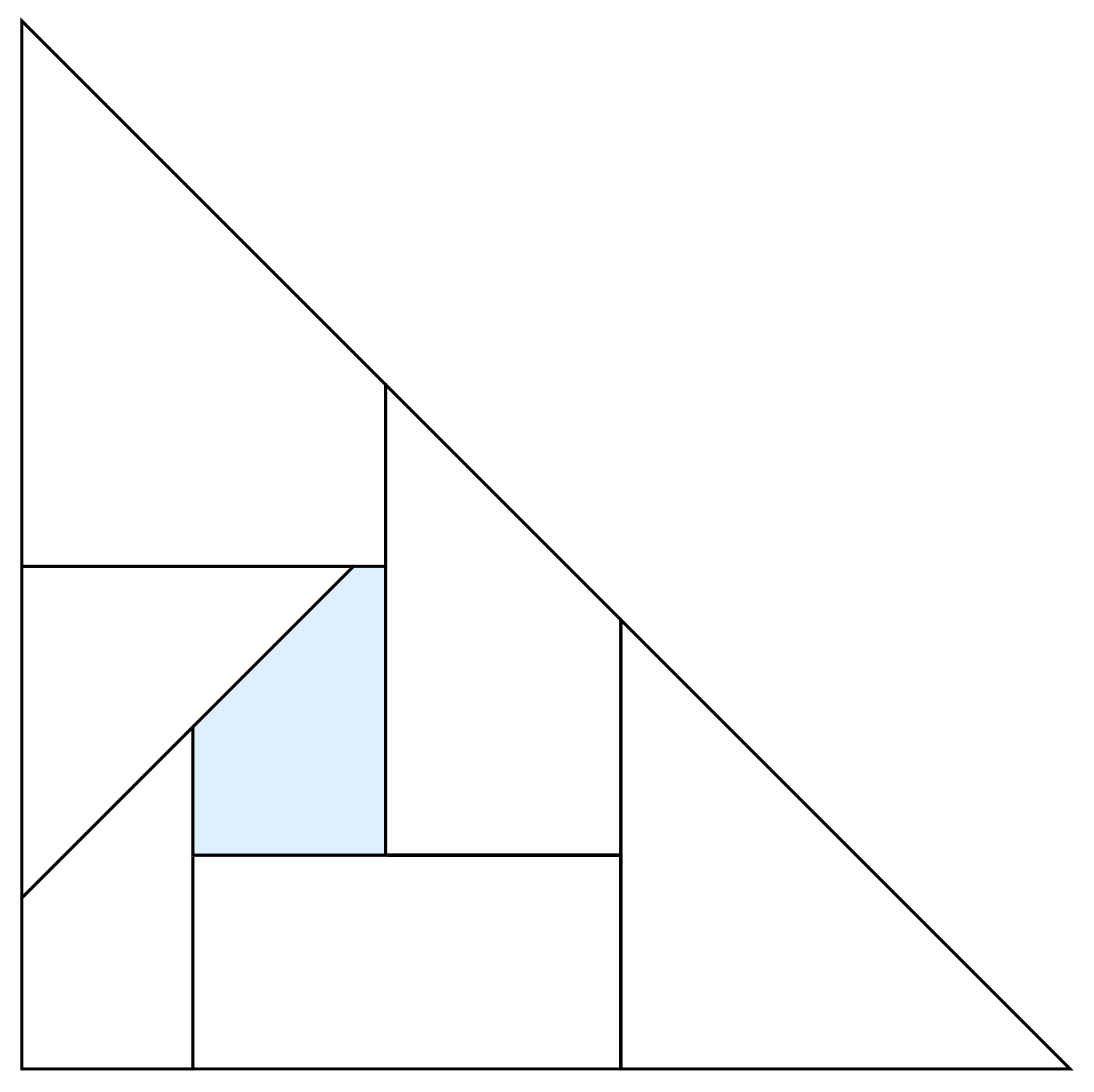}\end{center}
\caption{\label{tritiling}Combinatorially equivalent tilings in which one of the tiles (shaded) has changed shape. The other tiles have changed only by homothety.}
\end{figure}

Another set of results involves the study of the tile areas and orientations for a given tiling family.
(We define ``tile orientation" below---a tile changes orientation if it undergoes a $180^\circ$ rotation in the plane.)
Figure \ref{triareas} shows two combinatorially equivalent tilings with corresponding tiles having the same areas.
Notice that the small triangle has its orientation flipped from one tiling to the next.
\begin{figure}
\begin{center}\includegraphics[width=2.in]{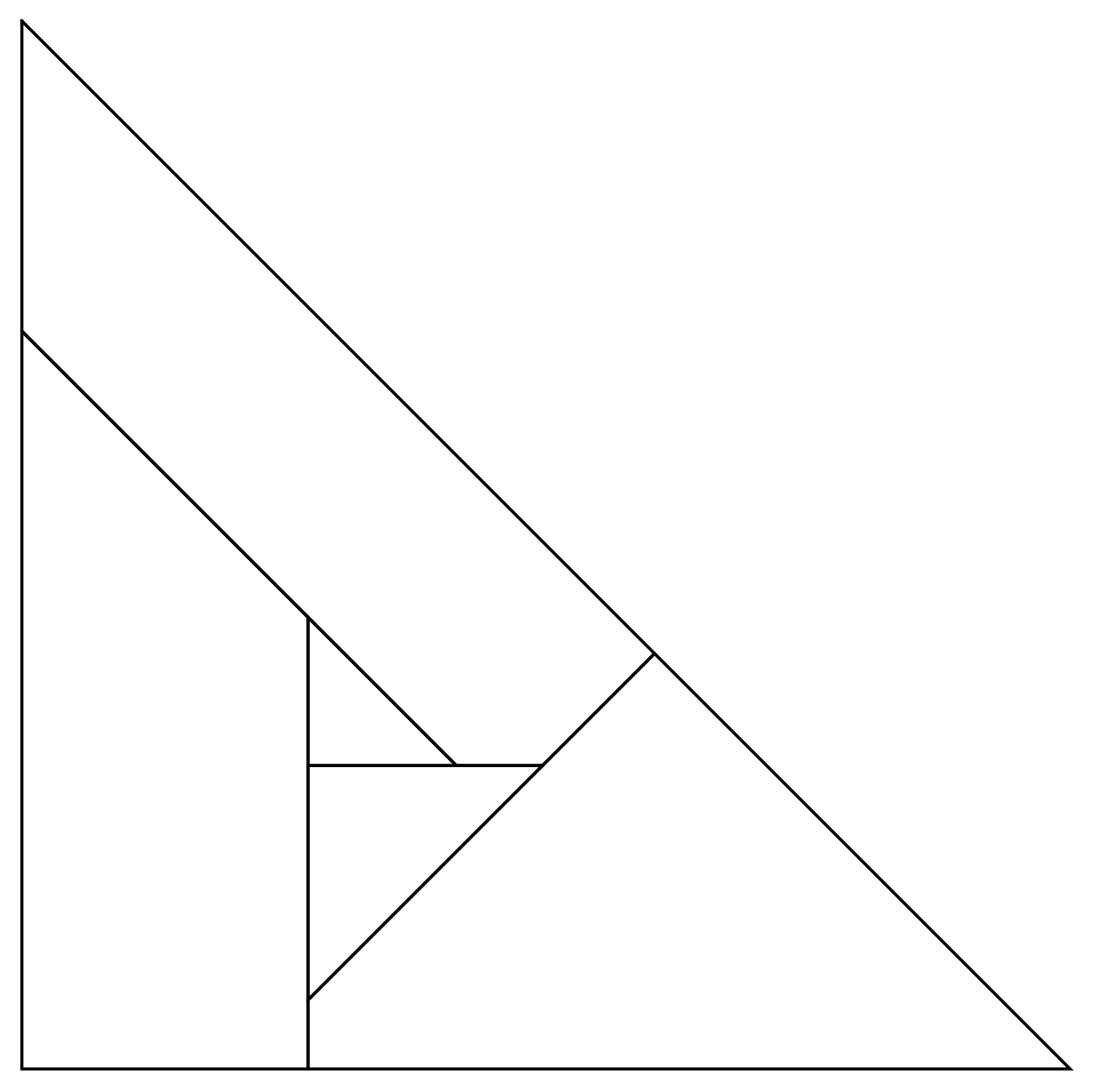}\hskip1cm\includegraphics[width=2.in]{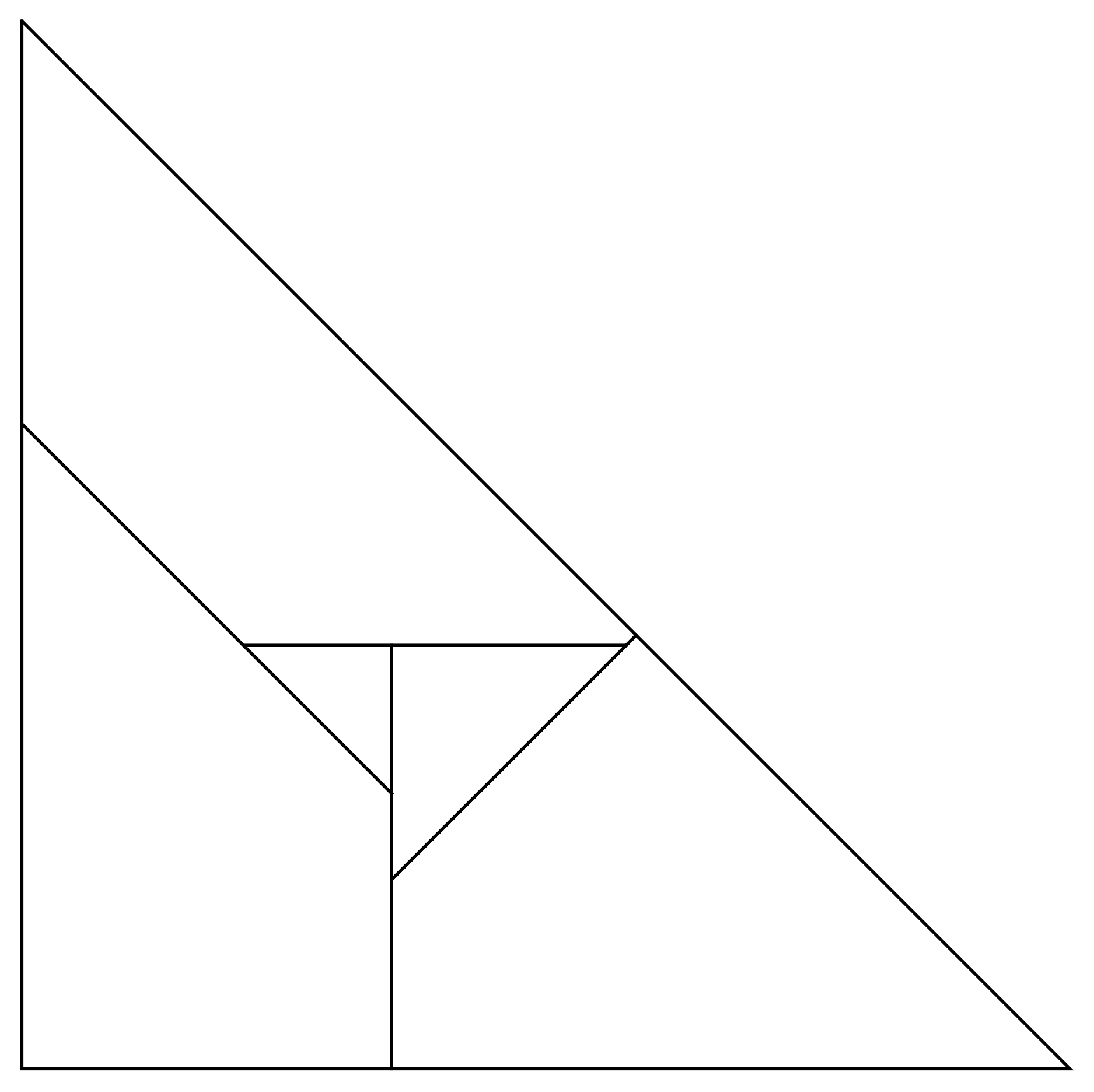}\end{center}
\caption{\label{triareas}Combinatorially equivalent tilings with corresponding tiles of the same area.}
\end{figure}

It is a remarkable fact about \emph{rectangle} tilings of a rectangle 
that one can find for any tiling an equivalent rectangle tiling with 
\emph{any} tuple of positive areas \cite{WKC}. Moreover for prescribed areas the set of equivalent rectangle tilings is in bijection
with a set of st-orientations (acyclic orientations with a unique source and sink) of a related graph, see \cite{AK}.
This statement seems to hold only for rectangle tilings. For other families of convex tilings, like the one illustrated in Figure \ref{triareas}, the situation is more complicated. 
We show (Theorem \ref{areathm}) that for
a given tuple of areas and orientations there is at most one tiling. Furthermore, if the underlying bipartite graph (see below)
has quadrilateral faces, the set of feasible
areas for a given choice of orientations is either empty or topologically a ball of full dimension $d=(\text{\# of tiles})-1$
(Corollary \ref{ballcor}). 

If however we generalize the notion of tiling
to ``homology tiling" (which is defined via a winding number condition), 
we have a clean statement: 
given any tiling of a convex polygon $R$
and a new tuple of areas, summing to the area of $R$,
there is a unique combinatorially equivalent homology tiling of $R$ with the new areas and the same tile orientations. See Theorem \ref{homologytilingthm}.
Since for rectangles any homology tiling is an actual tiling, this generalizes the above.
We furthermore conjecture that if no two interior edge directions are parallel, there is a unique homology tiling of $R$
for any choice of areas and orientations, see Conjecture \ref{2^n} and Figure \ref{all8}.
\bigskip

By a \emph{tiling} of a region $R\subset\R^2$ 
we mean a covering of $R$ with convex polygons, with nonintersecting interiors, as in Figure 
\ref{tritiling}. 
We do not require the tiles to line up edge-to-edge. 
In fact in this figure there are no interior vertices where every tile meets at one of its corners: 
we will consider such a vertex to be 
\emph{degenerate}. We take the point of view that such a vertex (if in the interior) is a degeneration of a tiling with one more tile, located
at that vertex, which has shrunk to a point, as in Figure \ref{degeneracy}. 
\begin{figure}
\begin{center}\includegraphics[width=1.5in]{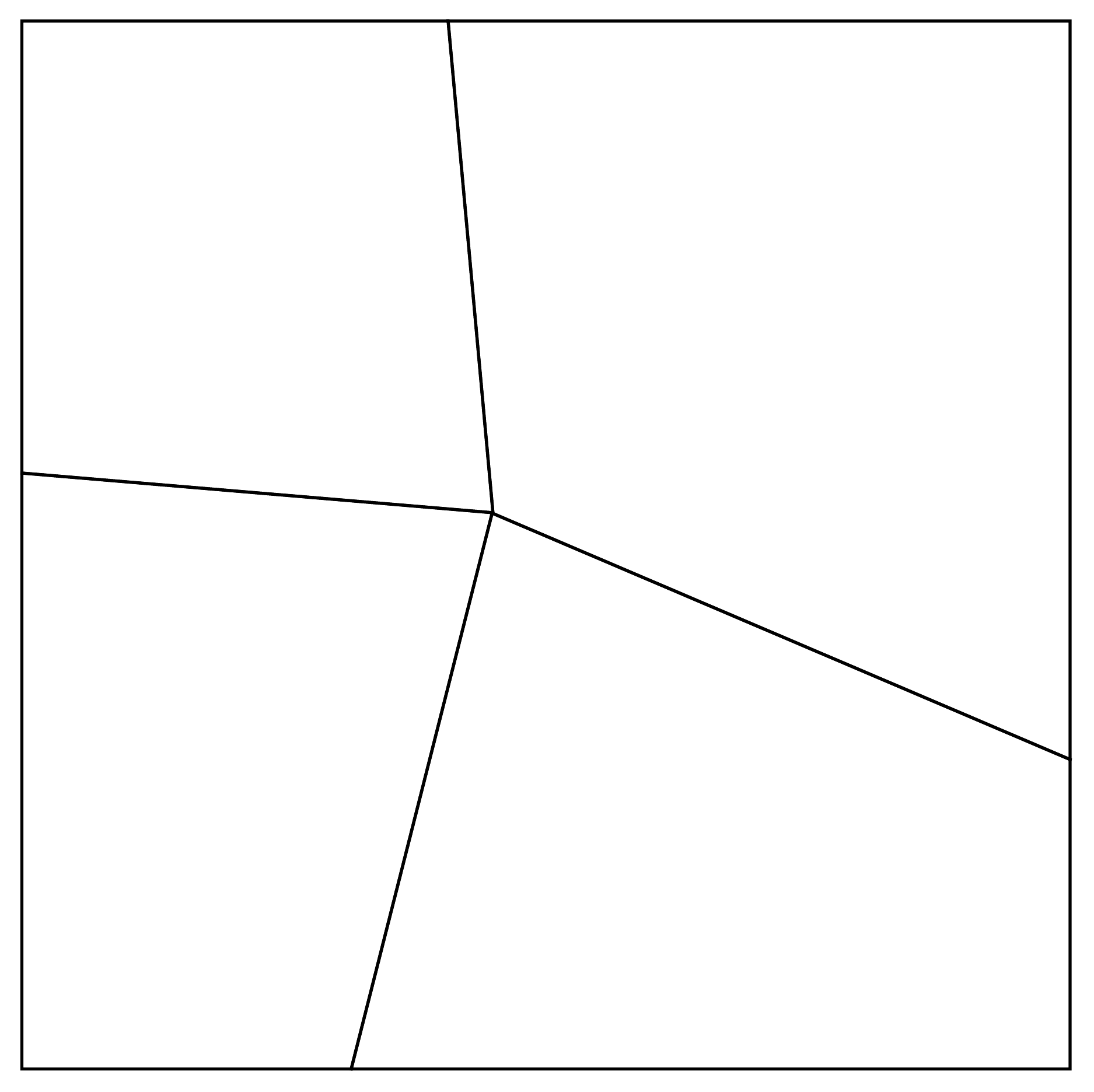}\hskip1cm\includegraphics[width=1.5in]{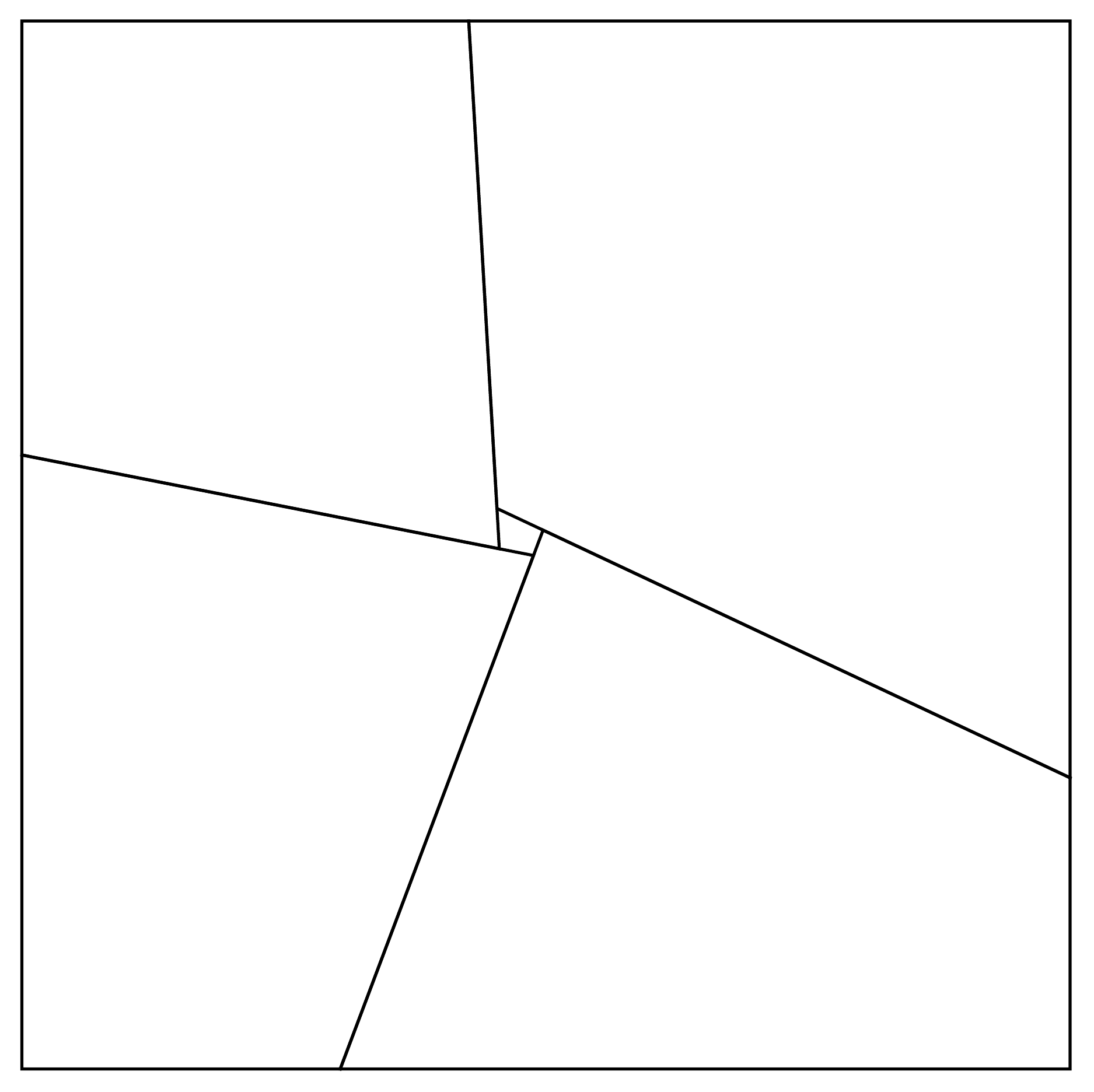}\end{center}
\caption{\label{degeneracy}A convex tiling with a degenerate tile, and a nearby nondegenerate tiling. }
\end{figure}
\begin{figure}
\begin{center}\includegraphics[width=1.5in]{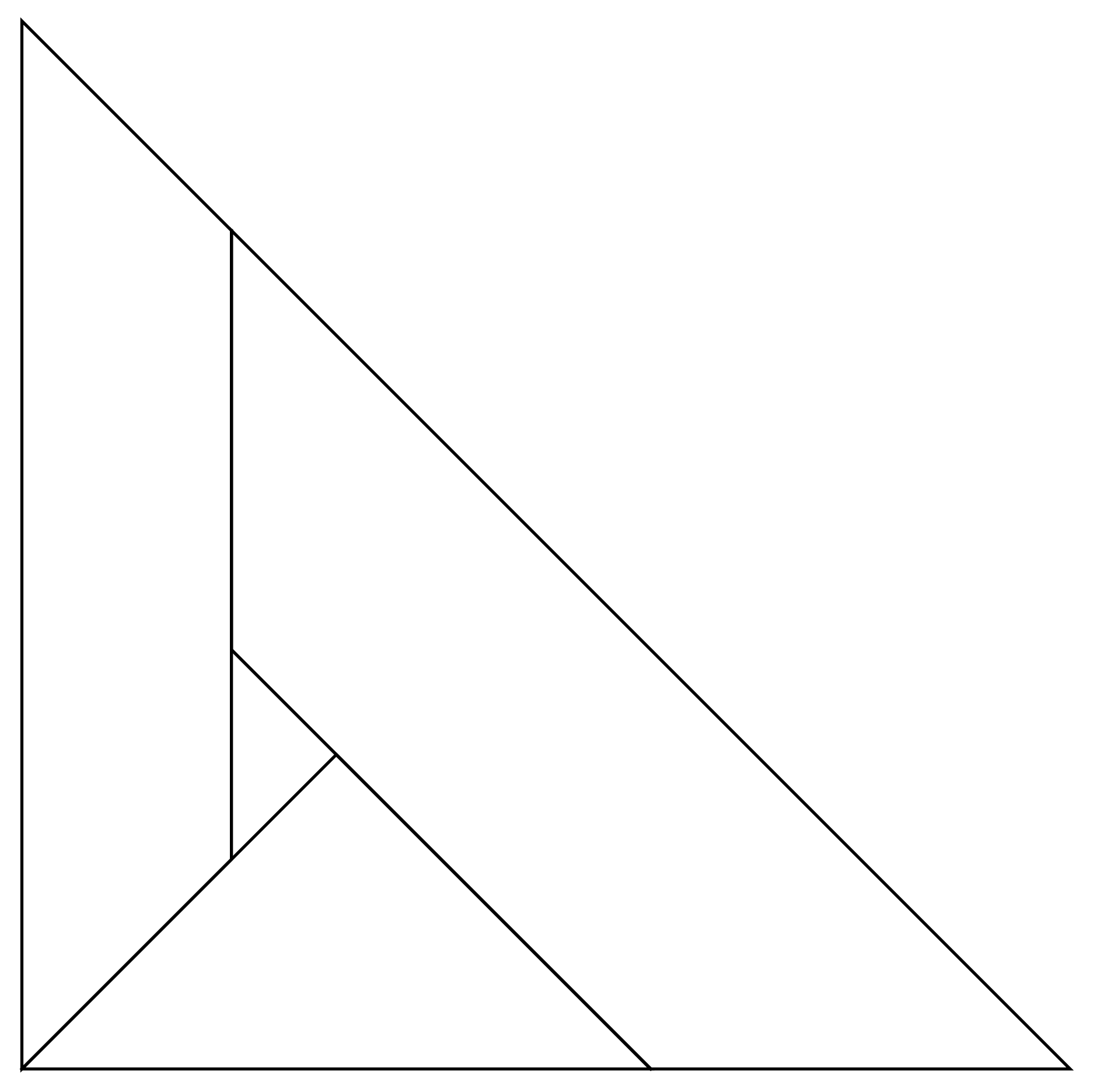}\hskip1cm\includegraphics[width=1.5in]{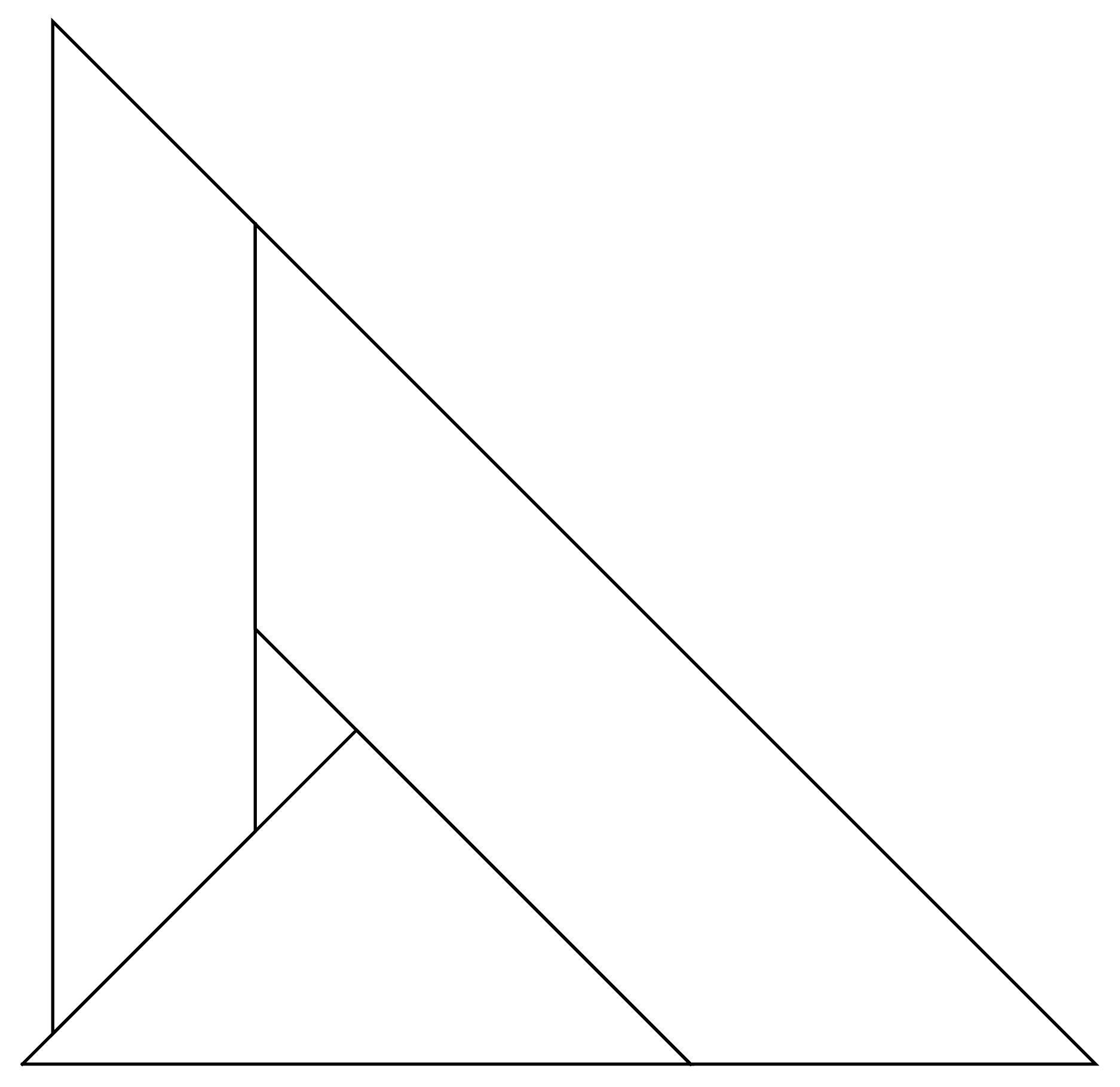}\end{center}
\caption{\label{cornerdeg}A degenerate tiling, and a nearby nondegenerate tiling.}
\end{figure}
Similarly, if multiple tiles meet at a corner of $R$, each tile meeting at one of its corners, then we consider such a tiling to be a degeneration of a tiling of a polygon $R'$ having an extra edge or edges
at that corner, whose lengths have gone to zero, as in Figure \ref{cornerdeg}. 
This point of view on degeneracies is slightly unusual,
but as we vary tilings in families such degeneracies are natural. In a nondegenerate tiling,
each vertex of a tile is at a `T' (or a T with several legs, as for the lower middle vertex in the right panel of Figure \ref{Kmatproperty}),
except for the convex corners of $R$. These tilings are called ``t-graphs'' in \cite{KS}. 

Associated to a (nondegenerate) tiling is a bipartite planar graph $\G$. It has a white vertex for each tile, a black vertex
for each maximal line segment in the tile boundaries, and an edge for an adjacency. See Figure \ref{Bgraph}.  
\begin{figure}
\begin{center}\includegraphics[width=2.in]{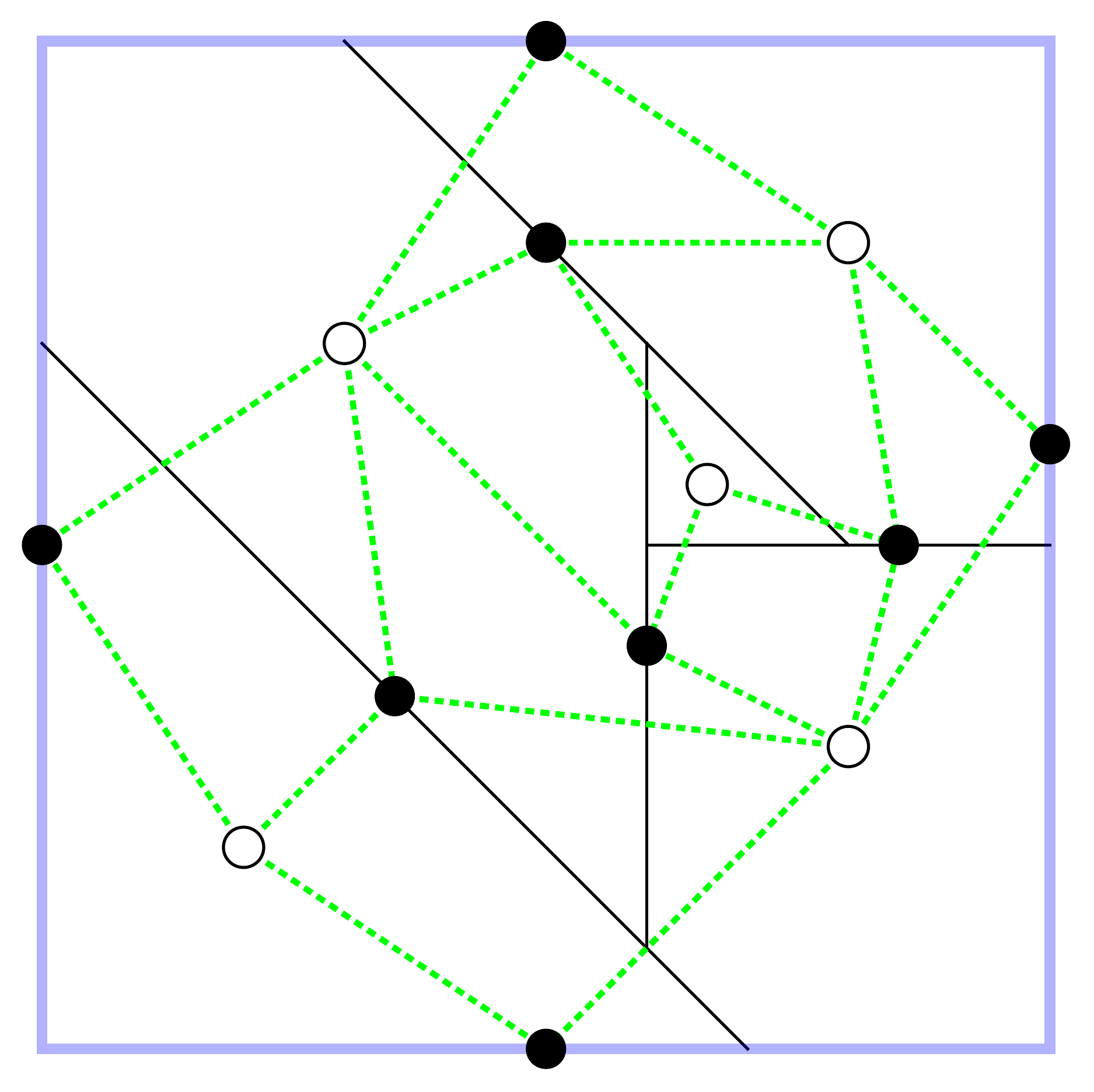}\end{center}
\caption{\label{Bgraph}Bipartite graph (green) associated to a convex tiling. Two tiling of $R$ 
are combinatorially equivalent if they have the same underlying bipartite graph
with the same boundary identifications.}
\end{figure}
Note that some black vertices come from segments along the 
boundary of the tiled region; we'll call these \emph{boundary vertices}.
Each quad face of $\G$ corresponds to a ``T'' in the tiling: two segments meet, one of which end in the interior of another.
Bounded faces of higher degree in $\G$ correspond to ``T''s with multiple legs: two or more segments end at an interior point of another segment.

We call two tilings \emph{combinatorially equivalent} if their graphs are isomorphic respecting boundary vertices,
that is, there is an isomorphism fixing the boundary vertices pointwise, by which we mean fixing which edge of the boundary polygon they correspond to.

To encode the tiling completely (up to translation), we can record, for each edge $e=wb$ of the bipartite graph $\G$, the edge vector in $\C$ giving the
counterclockwise boundary edge of tile $w$ as a subset of segment $b$. This turns $\G$ into a bipartite \emph{network}, that is,
an edge-weighted bipartite graph, with complex weights.


As we vary the shapes of tiles in the above families, 
we can record the areas as functions of the edge intercepts.
If bounded faces of $\G$ are quadrilateral,
the map $\Psi$ from edge intercepts to tile areas has differential  $D\Psi=K$,
where $K$ is a \emph{Kasteleyn matrix}, a matrix whose maximal minors counts perfect matchings of the underlying graph $\G$. See Section \ref{KasD}.
This observation, along with the reconstruction of a tiling from a graph given in \cite{KS} (Theorem \ref{convexgraphtotiling} below) 
allows us to realize the Kasteleyn matrix of a bipartite planar graph (with quad faces), up to gauge equivalence,
as the differential of a mapping. 
Some geometric and probabilistic implications of this fact are given in Section \ref{KasD} and 
Theorem \ref{diffthm}.

\bigskip

\noindent{\bf Acknowledgments.} This research was supported by NSF grant DMS-1940932 and the Simons Foundation grant 327929.
We thank Sebastien Franco, Gregg Musiker, David Speyer and Lauren Williams for comments and
discussions on this project.

\section{Background on dimers}

For more background on the dimer model see \cite{Kenyon.dimers}.
A dimer cover, or perfect matching, of a graph $\G$ is a collection of edges with the property that each
vertex is the endpoint of a unique edge in the collection. 
If $\nu= \{\nu_e\}_{e\in E}$ is a positive edge weight function on $\G$, we can associate to each dimer cover $m$ a weight
$\nu(m) = \prod_{e\in m}\nu_e$. 

Kasteleyn \cite{Kast67} showed how to compute the weighted sum of dimer covers of any planar graph.
When $\G$ is bipartite and planar, this sum is the determinant of a \emph{Kasteleyn matrix} for $\G$,
defined as follows. 
Let $K=(K_{w,b})_{w\in W,b \in B}$ be a matrix with rows indexing white vertices and columns indexing black vertices and
$$K_{w,b} = \begin{cases} c_{wb}\nu_{wb} &\text{if $w\sim b$}\\0&\text{else}\end{cases}$$
where $c_{wb}$ is a complex number of modulus $1$ with the property that the alternating product of $c$'s around
a bounded face of length $2\ell$ is $(-1)^{\ell+1}$. Here by alternating product around a face 
$w_1,b_1,w_2,\dots,w_\ell,b_{\ell}$ we mean the first, divided by the second, times the third, and so on: 
$$\frac{c_{w_1b_1}c_{w_2b_2}\dots c_{w_{\ell},b_{\ell}}}{c_{w_2b_1}\dots c_{w_1b_{\ell}}} = (-1)^{\ell+1}.$$

\begin{thm}[Kasteleyn \cite{Kast67}]\label{Kastthm} Let $K$ be a Kasteleyn matrix for a planar bipartite network $\G$. Then
$$Z:=\sum_{\text{dimer covers $m$}}\nu(m) = |\det K|.$$
\end{thm}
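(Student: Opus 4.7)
The plan is to follow the classical Kasteleyn argument, adapted to the bipartite setting where the matrix $K$ is already square with rows/columns indexed by $W$ and $B$. First I would expand the determinant:
\[
\det K = \sum_{\sigma} \text{sgn}(\sigma)\prod_{w\in W} K_{w,\sigma(w)},
\]
the sum being over bijections $\sigma:W\to B$. Because $K_{w,b}=0$ unless $w\sim b$, the nonzero terms are exactly indexed by perfect matchings $m$ of $\G$, and for each such $m$,
\[
\text{sgn}(\sigma_m)\prod_{w\in W}K_{w,\sigma_m(w)} = \text{sgn}(\sigma_m)\Big(\prod_{e\in m}c_e\Big)\nu(m),
\]
whose modulus is $\nu(m)$. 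Hence $|\det K|=Z$ will follow as soon as we prove that the complex phase
\[
\eps(m):=\text{sgn}(\sigma_m)\prod_{e\in m}c_e
\]
is independent of the matching $m$.

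To compare two matchings $m_1,m_2$, I would look at their symmetric difference $m_1\triangle m_2$, which (after deleting doubly covered edges) decomposes into a disjoint union of simple cycles in $\G$, each of even length and alternating between $m_1$-edges and $m_2$-edges. The ratio $\eps(m_2)/\eps(m_1)$ factors as a product of contributions, one per cycle. So it suffices to show that for any such alternating cycle $C$ of length $2\ell$, the permutation sign changes by exactly the factor needed to cancel the ratio
\[
\frac{\prod_{e\in m_2\cap C} c_e}{\prod_{e\in m_1\cap C} c_e}
\]
—that is, this ratio times the sign change equals $+1$.

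The sign change of $\sigma_{m_2}$ relative to $\sigma_{m_1}$ on a single alternating cycle of length $2\ell$ is $(-1)^{\ell-1}$ (a cyclic permutation of $\ell$ white vertices). So I must show that the alternating product of the $c$'s around the cycle equals $(-1)^{\ell-1}$ (equivalently, $(-1)^{\ell+1}$). This is exactly the Kasteleyn face condition, but stated for a \emph{simple cycle} $C$ rather than a face. The main obstacle is precisely this upgrade from faces to arbitrary simple cycles enclosing a region of $\G$.

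The passage from faces to general cycles I would handle by induction on the number of faces enclosed by $C$. The base case is a single face, which is the hypothesis on $K$. For the inductive step, write the interior of $C$ as a union of two subregions bounded by cycles $C'$ and $C''$ that share a common path $P$; the alternating product around $C$ equals (up to the factor from $P$ appearing twice, once in each direction) the product of the alternating products around $C'$ and $C''$. A short parity check—using that each interior vertex enclosed contributes both a white and a black multiplicative factor that cancel in pairs—shows that the correct power of $-1$ emerges from counting enclosed vertices. Concretely, one proves by induction that for a simple cycle $C$ of length $2\ell$ enclosing $V$ vertices of $\G$, the alternating product of $c$'s equals $(-1)^{\ell+1+V}$, and then observes that along an alternating cycle in a bipartite graph the enclosed vertices must be matched internally, forcing $V$ to be even so that the sign reduces to $(-1)^{\ell+1}$, which is what is needed. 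Once this is established, $\eps(m)$ is independent of $m$ and $|\det K|=\sum_m\nu(m)=Z$, completing the argument.
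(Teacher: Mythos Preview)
The paper does not give its own proof of this theorem: it is quoted as background from Kasteleyn \cite{Kast67} and used as a black box. So there is no ``paper's proof'' to compare against. Your proposal is precisely the classical Kasteleyn argument in the bipartite setting, and it is correct. The only places where a referee would ask you to be more explicit are (i) the bookkeeping in the inductive step showing that the alternating products around $C'$ and $C''$ multiply to that around $C$ with the $P$-contributions cancelling cleanly, and (ii) the parity count $\ell'+\ell''=\ell+k$, $V'+V''=V-(k-1)$ giving exponent $\ell+V+1\pmod 2$; both are routine once written out. Your final observation---that the vertices enclosed by an alternating cycle of $m_1\triangle m_2$ must be matched among themselves (since vertices on $C$ are already saturated by edges of $C$ and planarity forbids edges crossing $C$), hence $V$ is even---is exactly the point that makes the face condition propagate to arbitrary cycles.
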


Any two choices $K,K'$ of Kasteleyn matrix for $\G$ are diagonally equivalent:
$K' = D_WKD_B$ where $D_B,D_W$ are diagonal matrices with diagonal entries of modulus $1$. 
In Kasteleyn's original formulation the complex signs were just $\pm1$; however it is easy to see that
any Kasteleyn matrix with complex signs is diagonally equivalent to one with real signs. 

Given two edge weight functions $\nu,\nu'$ on $\G$ we say they are \emph{gauge equivalent} if there is a (nonzero) function $f$ on vertices 
such that
$\nu'_{wb} = \nu_{wb}f(w)f(b)$. 
Gauge equivalent weights give rise to the same probability measure on dimer covers, 
where a dimer cover has probability proportional to its weight (under a gauge equivalence, the constant of
proportionality will change but not the probability).

Edge weight functions $\nu,\nu'$ are gauge equivalent if their Kasteleyn matrices are diagonally equivalent (with diagonal matrices $D_B, D_W$ with diagonal entries $f(b), f(w)$ respectively).
Edge weight functions $\nu,\nu'$ are gauge equivalent if and only if they have the same \emph{face weights}, where the face weight $X_f$ of a (bounded) face $f$ with vertices $w_1,b_1,\dots,b_\ell$ is 
$$X_f = \frac{\nu_{w_1b_1}\nu_{w_2b_2}\dots \nu_{w_{\ell},b_{\ell}}}{\nu_{w_2b_1}\dots \nu_{w_1b_{\ell}}}.$$

Given an arbitrary set of positive face weights $X_f$, it is easy to construct a corresponding edge weight function
giving rise to those face weights. So the set of
equivalence classes of edge weights is globally parameterized by $(\R_+)^{F}$, where $F$ is the set of bounded faces of $\G$. 

We note that the gauge transformations for the edge weights and edge signs are essentially the same operation:
multiplying on the left and right by either a positive real diagonal matrix or a diagonal matrix of complex numbers of modulus $1$. Therefore we can combine these two operations into a single one.
We say two Kasteleyn matrices $K,K'$ are \emph{gauge equivalent} if they are diagonally equivalent via
(nonsingular) complex-valued diagonal matrices. Gauge equivalent Kasteleyn matrices correspond to the same 
probability measure on dimer covers. 

\old{Finally, if $\G$ has some bounded faces which are not quads, we can consider $\G$ to be a limit of a graph $\G_\eps$ with quadrilateral faces in which some edge weights have gone to zero. We construct $\G_\eps$ by quadrangulating each non-quad face of 
$\G$ in such a way as to preserve the bipartite structure, and each new edge is given weight $\eps$. As $\eps\to0$ we recover $\G$. This device is only needed in Sections \ref{KasD} and \ref{Homtilings}.}

\section{All shapes are possible}

\subsection{Space of shapes of a tile}
Given a convex polygon $t$, let $C_t$ be the space of convex polygons with the same number of edges and the same edge directions (in the same cyclic order), up to a global sign, and up to translation.
The space $C_t$ naturally has a linear structure: if $t$ has edges $z_1,\dots,z_n$ in counterclockwise order
then any other tile in $C_t$ will have edges $a_1z_1,\dots,a_nz_n$ for reals $a_i$, all of the same sign: all positive or all negative,
subject to the linear condition $\sum a_iz_i=0$. If we parameterize $C_t$ with the $n$-tuple $(a_1,\dots,a_n)$, 
it is naturally the interior of a cone
in an $n-2$-dimensional real vector space $V_t$. More precisely, $C_t$ is the union of two open convex cones which are negatives of each other:
$C^+_t$ consist of those tiles in which $a_i> 0$, and $C^-_t$ those in which $a_i<0$.
These two cones are called the two \emph{orientations} of $t$. 
It is convenient to add the origin to $C_t$ since as we vary tiles in families we sometimes
have to deal with the case where a tile shrinks to a point. 


The vector space $V_t$ in which $C_t$ sits has another natural linear coordinate system, using the intercepts of the lines 
defining the edges of $t$. See Section \ref{KasD} below.

\subsection{Space of shapes of a tiling}

Given a convex tiling $T=\cup_{w\in W} t_w$ of a polygon $R$, let us consider the set $C_T=C_T(R)$ of combinatorially equivalent tilings,
in which each tile has the 
same edge directions, up to sign, as the corresponding tile of $T$. Note that $C_T$ also has a linear structure:
an element of $C_T$ is determined by the shapes of its individual tiles, so 
$C_T$ lies inside the product $\prod_w C_{t_w}\subset\prod_w V_{t_w}$, and is defined by the linear conditions that 
certain sums of edge vectors are zero:
around each interior black vertex of $\G$ there is one such condition,
and for each boundary edge the sum of tile edge vectors equals the vector of the corresponding edge of $R$.

The space $C_T$ has a natural subdivision into regions $C_T(\sigma)$ where the individual tiles have fixed orientations:
$\sigma\in\{-1,1\}^k$ corresponds to a choice of orientation for each tile. 
Each subset $C_T(\sigma)$ is either empty or a convex polytope: given any two tilings $T_1,T_2\in C_T(\sigma)$
then any convex combination of $T_1$ and $T_2$ is also a tiling of $C_T(\sigma)$.
The full space $C_T$ is not typically convex, however.
These polytopes $C_T(\sigma)$
are glued in $C_T$ along parts where one or more tiles degenerate to points.

\old{In particular to $C_T$ there is an associated simplicial complex whose vertices are the feasible orientations $\sigma\in\{1,-1\}^T$,
and whose $k$-dimensional faces are sets of $k$ tiles which can simultaneously degenerate to points. }

\subsection{Kasteleyn matrix}

Let $T=\cup_{w\in W} t_w$ be a tiling of a convex polygon $R$, and let $\G$ be
the associated bipartite network. 
We note that $\G$ has $n-1$ more black vertices than white vertices, 
where $n$ is the number of edges of $R$. This fact follows from
an Euler characteristic argument: the tiling divides $R$ into $|W|$ open $2$-cells, $|B|$ open line segments and $n$ vertices.
By the Euler characteristic,
$$\chi(R) = 1 = |W| - |B| + n.$$
We let $B_{int}$ be the interior black vertices, and $B_{\partial}$ the boundary vertices.
Thus $|W|=|B_{int}|+1$. 

Let $K$ be the weighted bipartite adjacency matrix of $\G$;
$K$ has rows indexing the white vertices (tiles of $T$) and columns indexing the black vertices (maximal segments in the tile boundaries).
The entry $K_{w,b}$ is the complex number giving the vector of the edge of tile $w$ in direction $b$, 
oriented counterclockwise around the tile
boundary. 
Note that $K$ is a \emph{Kasteleyn matrix} for $\G$, in the sense that the alternating product of edge weights around any face of $\G$ 
is real with sign $(-1)^{\ell+1}$ where $2\ell$ is the degree of the face (the number of edges on its boundary).
See Figure \ref{Kmatproperty} for the proof of this fact.
\begin{figure}
\begin{center}\includegraphics[width=2.in]{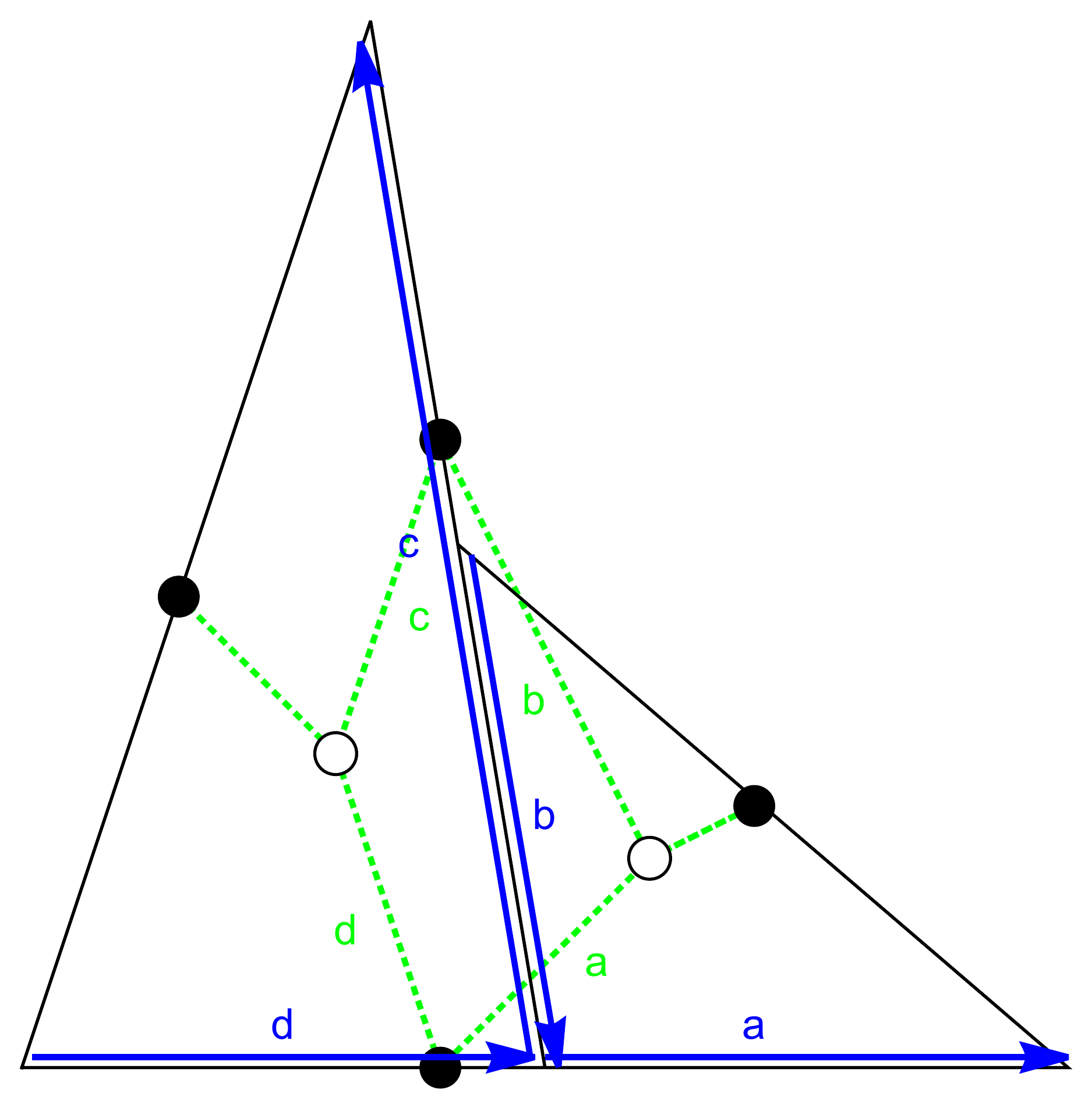}\hskip1cm\includegraphics[width=2.in]{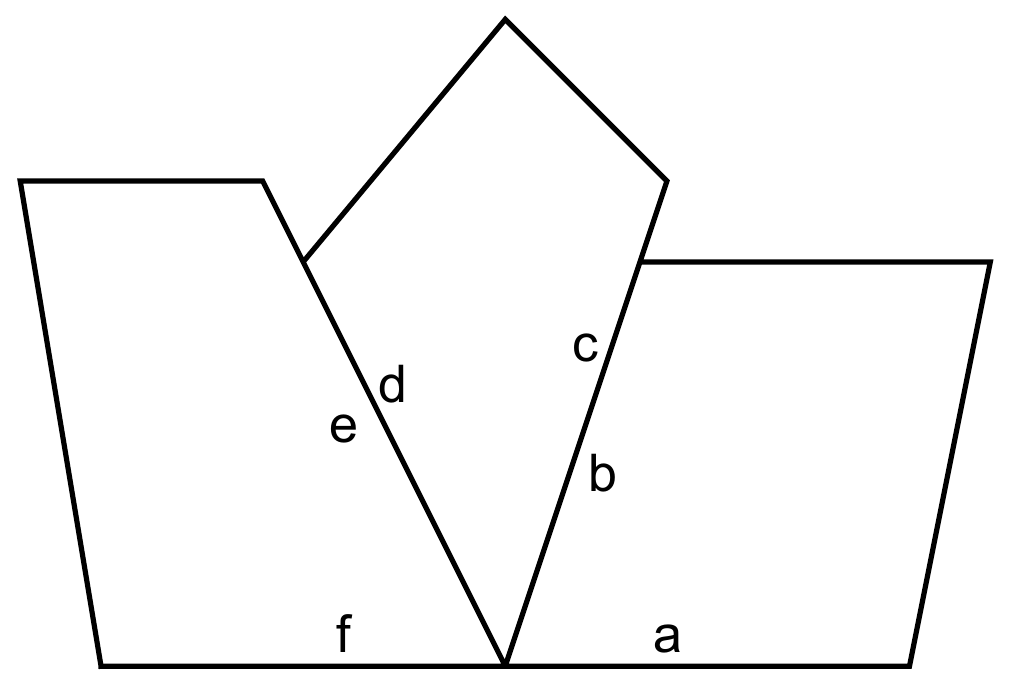}\end{center}
\caption{\label{Kmatproperty}In the first panel, the lower central vertex in blue corresponds to the green quadrilateral face of $\G$, with edge weights $a,b,c,d$. We have 
$\frac{ac}{bd}<0$ since $a,d$ point in the same direction and $b,c$ in opposite directions.
Similarly, in the second panel the lower central vertex corresponds to a hexagonal face; we have $\frac{ace}{bdf}>0$. }
\end{figure}

It is a bit harder to see that $K$ has full rank $|W|$, but this is important. 
That $K$ has full rank follows from Kasteleyn's theorem, Theorem \ref{Kastthm}.
We simply need to show that, once we remove all but one of the black boundary vertices from $\G$,
the new graph $\G'$ has at least one dimer cover. This is a surprising fact since it relates a geometric property (of being a convex tiling) 
to a combinatorial one. We can prove this is as follows.
Rotate $R$ so that one boundary edge $b_0$ is horizontal.
Rotate further by a very small positive amount that so that each tile $w$ has a unique vertex $v_w$ with lowest $y$-value. 
Since $v_w$ is a `$T$', exactly one of the two edges of face $w$ meeting at $v_w$ has $v_w$ as its lower endpoint; match $w$ to the
black vertex corresponding to this edge. The leftmost face containing $b_0$ along one of its edges is matched to $b_0$.
This concludes the proof that $K$ has maximal rank $|W|$.

\subsection{From tiling to tiling}

\begin{thm}\label{tilingtotiling}
Given a tiling $T=\cup t_w$ of a convex polygon $R$ and, for each tile $t_w$, a new tile $t'_w$ in $C_{t_w}$, 
there is a unique (up to homothety) combinatorially equivalent tiling $S=\cup s_w$ of a convex polygon 
$R'\in C_R$,
where $s_w=a_wt'_w$ is a homothetic copy of
$t_w'$, and where each $a_w\in\R$ could be positive, zero or negative.
\end{thm}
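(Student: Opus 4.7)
The plan is to cast the question as finding a real left null vector of a Kasteleyn-type matrix, and then use Theorem \ref{Kastthm} for existence, uniqueness, and the convexity of $R'$.

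\emph{Setup.} Let $\G$ be the bipartite graph of $T$. Replacing any $t'_w$ presented in the opposite orientation by its $180^\circ$ rotation (and later absorbing the sign back into $a_w$), we may assume that the CCW edge vector $K^{new}_{w,b}$ of $t'_w$ along segment $b$ is a positive real multiple of the original entry $K_{w,b}$. Then $K^{new}$ is a Kasteleyn matrix for $\G$ with the same complex sign pattern as $K$ but possibly different positive magnitudes. The condition that the tiles $s_w=a_w t'_w$ fit together along every interior segment is precisely $a^T K^{new}_{int}=0$, where $K^{new}_{int}$ is the $|W|\times(|W|-1)$ submatrix indexed by $B_{int}$.

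\emph{Reduction to a real matrix, and existence.} Every entry of column $b$ of $K^{new}$ lies on the complex line $e^{i\theta_b}\R$, where $\theta_b$ is the direction of segment $b$, so dividing column $b$ by $e^{i\theta_b}$ yields a real matrix $\tilde K^{new}$. The Kasteleyn sign condition is preserved---the phase factors cancel in every face's alternating product---and left null spaces are unchanged. Applying Theorem \ref{Kastthm} to the square $\tilde K^{new}_{B_{int}\cup\{b_0\}}$ gives $|\det|=Z>0$, the partition function of dimer covers of the corresponding subgraph weighted by the positive $|K^{new}|$; this is nonzero because a dimer cover exists by the matching argument given earlier in the paper, which depends only on $\G$, not on tile shapes. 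Hence $\tilde K^{new}_{int}$ has full rank $|W|-1$ and its real left null space is exactly $1$-dimensional. Any nonzero $a$ in it yields tiles $s_w=a_w t'_w$ that match along every interior segment and therefore assemble into a combinatorially equivalent tiling of some polygon $R'$ whose edge directions are those of $R$. Uniqueness up to homothety is immediate from the $1$-dimensionality of the null space.

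\emph{Convexity of $R'$, and the main obstacle.} It remains to see that $R'$ is convex (i.e., $R'\in C_R$); this is the step I expect to be the most delicate, since the rank argument alone gives a null vector but says nothing about the signs of the resulting boundary edges. The signed CCW length of the boundary edge of $R'$ in direction $\theta_{b_\partial}$ is $\sum_w a_w\,\tilde K^{new}_{w,b_\partial}$. Using Cramer's formula $a_w\propto(-1)^w\det M^{new}_w$ (with $M^{new}_w$ obtained from $\tilde K^{new}_{int}$ by deleting row $w$) and cofactor-expanding along a column, this sum is proportional, by a factor independent of $b_\partial$, to $\det\tilde K^{new}_{B_{int}\cup\{b_\partial\}}$. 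Two Kasteleyn matrices with the same complex sign pattern have determinants of the same sign (the Kasteleyn phase is a common factor and the magnitudes are positive partition functions), so this determinant shares its sign with $\det\tilde K_{B_{int}\cup\{b_\partial\}}$, which, using $a=\mathbf 1$ in the original case, computes the (positive) length of the edge of $R$ at $b_\partial$. With the corresponding normalization of $a$, every boundary edge of $R'$ is then positive in the CCW direction, so $R'$ is convex with edge directions matching those of $R$.
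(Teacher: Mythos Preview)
Your existence and uniqueness argument via the left null space of the Kasteleyn matrix is the paper's, though you are more explicit than the paper in justifying why $K^{new}$ has full rank (invoking Theorem~\ref{Kastthm} and the matching construction), which is a welcome addition.

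Where you genuinely diverge is in the ``it is actually a tiling'' step. The paper argues topologically: it builds an abstract disk $D$ from the dual $\G^*$, observes that the original tiling defines an orientation-preserving map $D\to R$, and then continuously deforms the tile shapes, noting that the map stays orientation-preserving (even when a tile's sign flips, since multiplying by a negative real is a $180^\circ$ rotation) and that the boundary remains a homeomorphism onto $\partial R'$. You instead give an algebraic proof that $R'\in C_R$: Cramer's formula identifies the boundary edge at $b_\partial$ with $\det\tilde K^{new}_{B_{int}\cup\{b_\partial\}}$ up to a $b_\partial$-independent factor, and since this shares its sign with $\det\tilde K_{B_{int}\cup\{b_\partial\}}$ (same Kasteleyn sign pattern, nonzero partition functions), all boundary edges are consistently oriented. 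This is a clean alternative that avoids the deformation argument and yields convexity directly.

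There is, however, a gap. You write that tiles matching along every interior segment ``therefore assemble into a combinatorially equivalent tiling,'' but matching only guarantees that the tiles can be consistently \emph{placed}; it does not preclude overlaps or multiply-covered regions. Proving $R'$ convex does not by itself close this. You still need the final step the paper supplies: each white face maps orientation-preservingly regardless of the sign of $a_w$, black faces collapse to segments, and the boundary of the abstract disk maps homeomorphically onto the convex curve $\partial R'$; a standard degree (or maximum-principle) argument then forces the map to be a homeomorphism onto $R'$. Once you add this sentence, your proof is complete and gives a pleasant algebraic substitute for the paper's continuity argument.
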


\begin{proof} In order to make a tiling out of the tiles $a_wt'_w$, the reals $\{a_w\}$ must satisfy a linear relation around each interior segment, that is, at each non-boundary black vertex $b\in B_{int}$.
More precisely, let $K$ be the Kasteleyn matrix associated to $T$, and let $K'$ be the matrix
obtained from $K$ by replacing the row for $t_w$ by the corresponding row for $t'_w$
(that is, the row with the same nonzero entries, but these corresponding to the edge vectors of $t'_w$ instead of $t_w$). 
The equation on the $\{a_w\}$ is
$\sum_w a_w K'_{wb}  =0$ for non-boundary vertices $b$.
Since $K'$ still has full rank $|W|$, and $|B_{int}|=|W|-1$, there is a unique solution vector $(a_w)_{w\in W}$ 
up to scale.
Since each column of $K'$ has a constant argument, the unique solution can be chosen real. 

It remains to see that this solution defines a tiling. Let $\G^*$ be the dual graph of $\G$, without the external vertex. 
We glue in a disk for each bounded face of $\G^*$ (each non-boundary vertex of $\G$) to make a topological disk $D$ in which $\G^*$ is embedded. 
The initial tiling defines a continuous map from $D$ to $R'$ which collapses each black face of $\G^*$
to a segment, and maps each white face 
to the corresponding tile in an orientation-preserving fashion.
We can deform the tiles $t_w$ continuously from their initial values (since the space of shapes $\bar C_t$ of each tile is connected);
along the deformation the map from $D$ remains orientation-preserving, sending the boundary of $D$ homeomorphically to the boundary of $R'$ and preserving the boundary orientation.  The orientation of the mapping is preserved even 
if the individual tiles
change their tile-orientation (which rotates a tile by $\pi$, but preserves its orientation as a map), and the mapping is surjective 
to $\R'$, so defines a tiling.
\end{proof}

If $R$ is initially not convex, the same argument works, but since each boundary edge of $R$ will have its length adjusted to make $R'$, this might result in $R'$ being self-intersecting; nonetheless there is at least a locally injective tiling
of $R'$.

\subsection{From a graph to a tiling}

Suppose we are just given the planar bipartite graph $\G$ with positive edge weights $\{\nu_{wb}\}$, and a convex polygon $R$, along with a cyclic identification of the black boundary vertices of $\G$ with the edges of $R$. 
Then under the certain nondegeneracy condition there is an associated tiling. 
We say $\G$ is \emph{$2$-nondegenerate} if
every pair of edges having distinct endpoints (at most one of the four of which is a boundary vertex) can be completed to a dimer cover of $\G$
using exactly one of the boundary vertices.
For example if $\G$ has a degree-$2$ vertex it is \emph{not} $2$-nondegenerate. See Figure \ref{2deg} for another $2$-degenerate example.
\begin{figure}
\begin{center}\includegraphics[width=2.in]{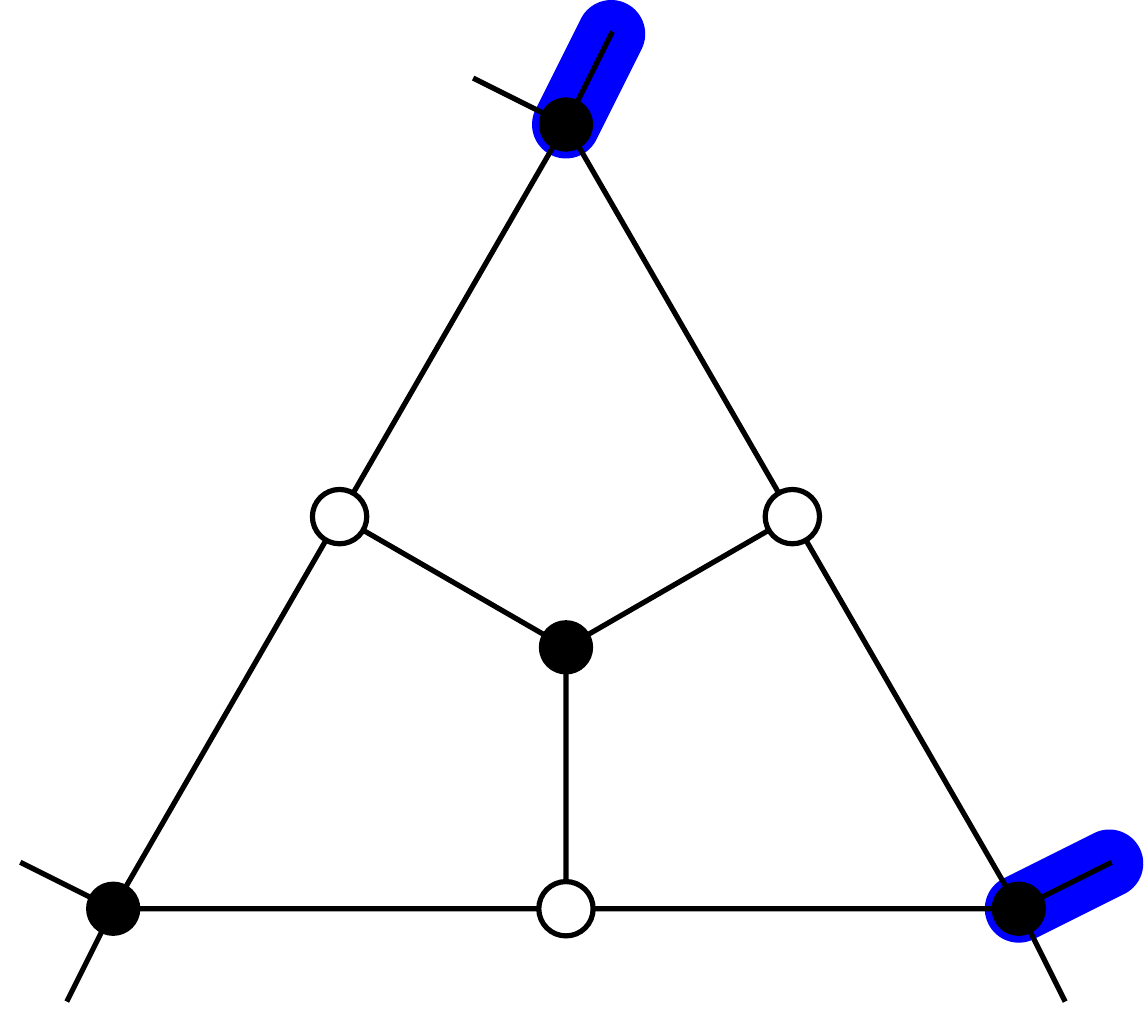}\end{center}
\caption{\label{2deg} A graph which contains a subgraph as shown (which is connected to the rest of the graph only via the three black corner vertices) is $2$-degenerate; for example no dimer cover
contains both blue edges, since the three black vertices have among them only two remaining neighbors.
In any tiling with this graph the tiles for the three white vertices will be reduced to segments. If we swap the colors $W\leftrightarrow B$, however,
then a nondegenerate tiling may exist, but in any such tiling the tiles for the four white vertices will have convex union.}
\end{figure} 
The author and Sheffield proved the existence of a tiling under these conditions: 

\begin{thm}[\cite{KS}]\label{convexgraphtotiling} Suppose $\G,R$ are as above and $\G$ is $2$-nondegenerate.
Then there is a unique gauge equivalent choice of edge weights and a unique tiling of $R$ with those edge weights. 
\end{thm}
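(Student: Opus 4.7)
The plan is to construct a complex matrix $K$ whose entries will turn out to be the edge vectors of the desired tiling: $K_{w,b}$ should equal the vector of the edge of tile $w$ lying along segment $b$, oriented counterclockwise around $w$. Such a $K$ must be a Kasteleyn matrix for $\G$ having two further properties: (i) each column has a constant complex argument $d_b$ (the direction of the segment $b$), and (ii) each row sums to zero (the tile closes up). Once such a $K$ is found, the tiling is read off directly, and the choice of $K$ determines the promised gauge representative of $\nu$.

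First I would fix the column directions $d_b$. The cyclic identification with $\partial R$ prescribes $d_b$ for every boundary column. For an interior column, $d_b$ is then determined by propagation across bounded faces: Kasteleyn's sign condition (alternating product $=(-1)^{\ell+1}$ around a face of length $2\ell$) expresses $d_b$ in terms of the $d$'s of the other black vertices on any face containing $b$. Because $\G$, viewed in the disk, is simply connected, this propagation is consistent and yields a unique assignment of $d_b$ on every interior black vertex.

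With directions fixed, the entries $K_{w,b}$ reduce to signed real multiples of $d_b$, so finding $K$ becomes a linear problem. Requiring $|K_{w,b}|$ to be gauge-equivalent to $\nu_{w,b}$ and the rows to sum to zero, I get a linear system whose coefficient matrix, after deleting one boundary column, is a square Kasteleyn-type matrix for the reduced graph. The 2-nondegeneracy hypothesis guarantees, via Theorem~\ref{Kastthm}, that this square matrix has nonzero determinant (it has at least one nonvanishing dimer cover), so the system has a unique solution up to an overall complex scale. This scale is the final homothety/rotation freedom, pinned down by matching the boundary edges to $\partial R$. An explicit dimer cover can be produced by the same sweeping construction used in the proof of Theorem~\ref{tilingtotiling}, which also shows that the solution can be chosen with positive magnitudes.

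Finally I would assemble the tiles: each tile $w$ is the polygon whose counterclockwise edge vectors are $K_{w,b}$ listed in the planar cyclic order of $b$ around $w$. Convexity of each tile, and the correct alignment of neighboring tiles along their shared segments, both follow from the Kasteleyn sign condition at each bounded face: a quadrilateral face is a T-junction and the sign $-1$ forces the two half-segments of the through-segment to be collinear; higher-valence faces encode multi-legged T-junctions analogously. The main obstacle is to verify globally that the assembled tiles cover $R$ exactly once, with no overlaps or gaps. I would handle this topologically, just as in the proof of Theorem~\ref{tilingtotiling}: the assembly defines a continuous map from the filled dual disk of $\G$ into $\C$ that is locally orientation-preserving on the interior (by tile convexity and segment-matching) and restricts to a degree-one homeomorphism onto $\partial R$, hence is globally a homeomorphism onto $R$. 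Uniqueness then follows, since any other candidate tiling would give another solution of the linear system above, differing only by the overall scale.
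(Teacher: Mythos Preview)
Your first step---determining the interior segment directions $d_b$ by ``propagation'' from the boundary via the Kasteleyn sign condition---does not work. In the target matrix $K'$ each column $b$ has constant argument $d_b$ (up to sign), so in the alternating product around any face the factor $d_b$ appears once in the numerator and once in the denominator and cancels. The face condition therefore constrains only the \emph{signs} of the real multipliers $r_{w,b}$, never the angles $d_b$. And indeed the interior directions genuinely depend on the weights $\nu$: by Corollary~\ref{globalc} the $(\G,R)$-tilings form an $|F|$-parameter family as the face weights vary, and along this family the interior segments rotate continuously. Hence the $d_b$'s cannot be read off from $R$ and the sign condition alone, and the linear system you set up in your second step is posed over the wrong data.

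The paper avoids this by never isolating the directions. It writes the unknown tiling matrix as $K'=D_WKD_B$ with \emph{complex} diagonal $D_W,D_B$ and solves for these gauge factors in three linear steps: first $D_W$ from the one-dimensional left kernel of $K$ restricted to $B_{int}$; then $(D_B)_b$ for boundary $b$ by matching the column sums to the edge vectors $q_i$ of $R$; finally $(D_B)_b$ for interior $b$ from the row-sum equations $KD_B1_B=0$. The segment directions $d_b$ then emerge a posteriori as the arguments of the resulting columns. A second gap in your outline is the assertion that tile convexity (equivalently, that all $r_{w,b}$ in a given row share a sign) follows from the face sign condition; it does not, and this is precisely the nontrivial content handled by the maximum-principle argument in~\cite{KS}. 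The $2$-nondegeneracy is invoked there to prevent white tiles from collapsing to segments, not merely to make a determinant nonvanishing.
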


\begin{proof} 
See \cite{KS} for the complete proof. We'll give the construction (and prove unicity) here.
Let $q_1,\dots,q_n\in\C$ be the counterclockwise boundary edges of $R$.
Let $K$ be a
Kasteleyn matrix for $\G$. We need to find diagonal matrices $D_W,D_B$ so that 
$K' = D_WKD_B$ defines the tiling, that is, $K'1_B = 0$ and $1_WK'$ is the vector which is zero on internal
black vertices and $q_i$ on the $i$th boundary black vertex. (Here $1_B$ and $1_W$ are the ``all-$1$s'' vectors.)
The solution is given in three linear algebraic steps. 

First, find a nonzero vector $v_W$ so that $v_WK$ is zero on $B_{int}$.
Since $K$ restricted to $B_{int}$ has full rank and has one more row than column, there is a unique
solution up to scale. Define $D_W$ to be the diagonal matrix with entries of $v_W$.

Second, for $b\in B_{\partial}$, the sum of the $b$th column of $D_WKD_B$
needs to add to $q_i$; this determines $(D_B)_{b,b}$. 

Finally, the remaining $|B_{int}|$ entries of $D_B$ are determined by the linear equations $KD_B1_B=0$. (Here there
is one more equation than variable but since the sum of each column is zero the last equation is a consequence
of the previous ones).

The fact that $K'$ determines an actual tiling is a consequence of the maximum principle: we refer the reader to \cite{KS}. 
The $2$-nondegeneracy implies that white tiles do not degenerate to segments.
\end{proof}

Since the edge weights modulo gauge are described by the face weights $\{X_f\}$, we have the following.

\begin{corollary}\label{globalc} For fixed convex $R$ and $2$-nondegenerate $\G$, the space of $(\G,R)$-tilings is homeomorphic to $\R_+^F$, with global coordinates provided by the face weights.
\end{corollary}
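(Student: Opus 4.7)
The plan is to exhibit an explicit homeomorphism by using Theorem \ref{convexgraphtotiling} in one direction and reading off edge lengths in the other. Let $\mathcal{T}(\G,R)$ denote the space of $(\G,R)$-tilings, topologized as a subset of the product $\prod_w C_{t_w}$ (equivalently, by tile-vertex positions). Define $\Phi:\R_+^F\to\mathcal{T}(\G,R)$ and $\Psi:\mathcal{T}(\G,R)\to\R_+^F$ and check they are mutually inverse and continuous.

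To build $\Phi$, given face weights $X=(X_f)_{f\in F}\in\R_+^F$, I would invoke the remark preceding the statement to pick any positive edge weight function $\nu$ realizing those face weights, pair $\nu$ with any valid choice of unit Kasteleyn signs $c_{wb}$ to form a Kasteleyn matrix $K$, and apply Theorem \ref{convexgraphtotiling} to obtain the tiling $\Phi(X)$. To see $\Phi$ is well defined, observe that any two such $K$'s coming from gauge-equivalent $\nu$'s (equivalently, with the same face weights) differ by multiplication on the left and right by complex diagonal matrices; this gauge factor can be absorbed into the $D_W, D_B$ produced in the proof of Theorem \ref{convexgraphtotiling}, and the boundary normalization $1_WK'=$ (the prescribed $q_i$'s on $B_\partial$) pins down the global scale uniquely. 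Hence the resulting tiling depends only on $X$. For $\Psi$, given a tiling $T$, its Kasteleyn matrix $K'$ has entries equal to tile edge vectors; set $\nu_{wb}=|K'_{wb}|$ and let $\Psi(T)$ be the face weights of $\nu$. This is manifestly in $\R_+^F$ and independent of the argument signs.

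Bijectivity is then immediate. The composition $\Psi\circ\Phi$ is the identity because the tiling $\Phi(X)$ is built from an edge weight function $\nu$ whose face weights are $X$, and multiplying $\nu$ by a gauge leaves face weights unchanged. The composition $\Phi\circ\Psi$ is the identity by the uniqueness clause of Theorem \ref{convexgraphtotiling}: reconstructing from the face weights of $T$ produces a tiling with the same (gauge class of) edge weights, and uniqueness forces it to coincide with $T$.

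Continuity of $\Psi$ is clear, since edge vectors, their moduli, and the ratios defining face weights all depend continuously (in fact rationally) on tile-vertex positions. For $\Phi$, I would pick the realizing $\nu$ by an explicit formula $\nu_{wb}=\prod_f X_f^{\alpha_{f,wb}}$ (with $\alpha$'s coming from any chosen trivialization of the cocycle defining face weights), which is continuous in $X$; then the three linear-algebraic steps of Theorem \ref{convexgraphtotiling} (solve $v_WK|_{B_{int}}=0$, fix the scale from the boundary $q_i$'s, then solve $KD_B\mathbf{1}_B=0$ for the interior entries of $D_B$) are each unique solutions of nondegenerate linear systems whose coefficients depend continuously on $X$. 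Thus the entries of $K'=D_WKD_B$, which are precisely the edge vectors of $\Phi(X)$, depend continuously on $X$. The only real point that needs verification is that the solved-for matrices never collapse — but this is automatic from the full-rank statements already proved in Theorem \ref{convexgraphtotiling} and Kasteleyn's theorem, since $2$-nondegeneracy forces the relevant maximal minors of $K$ to be nonzero for every choice of positive weights. The main potential obstacle, that of producing a continuous section $X\mapsto\nu$ of the map from edge weights to face weights, is handled by the explicit monomial formula above, so no real difficulty remains.
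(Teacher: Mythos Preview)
Your proposal is correct and follows the same approach the paper intends: the paper states the corollary as an immediate consequence of Theorem~\ref{convexgraphtotiling} together with the fact that gauge classes of edge weights are globally parameterized by face weights, and you have simply spelled out the bijection and continuity details that the paper leaves implicit. Nothing substantive differs.
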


A stronger variant of Theorem \ref{convexgraphtotiling} above is as follows. Let $\G$ be a bipartite graph satisfying the above conditions,
and for each white vertex $w$ choose a convex $k$-gon $t_w$ (where $k$ is the degree of $w$), under the constraint
that if two white vertices have a black neighbor in common then the corresponding edges (corresponding to the adjacencies with the black vertex) have the same direction up to sign. Suppose moreover that the directions of the boundary black vertices
can be oriented so that as one moves cclw around the boundary of $\G$, they turn once around, that is, there is a convex
polygon with those edge directions in cyclic order.
Then there is a tiling of a convex polygon with the combinatorics of $\G$ in which for each white vertex $w$ the
corresponding tile is homothetic to $t_w$. This is a tiling analog of the theorem of Schramm \cite{Schramm}.

\section{Areas and the Kasteleyn matrix}

\subsection{The area form} 

Recall the cone $C_t\subset V_t$ of convex polygons with edge directions 
in the directions of a convex polygon $t$. On $C_t$ there is a quadratic form $q_t$ giving the area of the polygon.
It is a homogeneous quadratic function of the edge lengths, defined on all of $V_t$. By a Theorem of Thurston \cite{Thurston}
it has signature $(1,n-3)$ when $t$ is a (convex) $n$-gon.  In particular if we consider the subset of $C_t$ of polygons of fixed area $A>0$,
it consists of two disjoint hyperboloid components $H_+$ and $H_-$, one for each of the two different orientations of $t$.

\begin{thm}\label{areathm} Let $T$ be a nondegenerate convex tiling of a convex polygon $R$ and consider the space of combinatorially 
equivalent tilings of $R$ with the same edge directions and same tile orientations.
Then for a given set of tile areas there is at most one tiling. 
\end{thm}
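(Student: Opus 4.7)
The plan is to use the Brunn--Minkowski inequality tile by tile. Suppose $T_0$ and $T_1$ are two tilings lying in the same $C_T(\sigma)$, with the same orientations and the same tile areas $A_w$ for every $w\in W$. Since the text observes that $C_T(\sigma)$ is a convex polytope, the linear interpolation
$$ T_s \;=\; (1-s)\,T_0 + s\,T_1,\qquad s\in[0,1], $$
again lies in $C_T(\sigma)$. In the edge-length coordinates on $V_{t_w}$ the tile $t_{w,s}$ is precisely the Minkowski sum $(1-s)\,t_{w,0} + s\,t_{w,1}$ of the two corresponding convex polygons in $\R^2$, because tiles with the same edge directions add edge lengths under Minkowski summation. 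Moreover the boundary edges of $R$ are convex combinations of themselves, so $T_s$ is a tiling of $R$ for every $s$.

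Now apply Brunn--Minkowski in $\R^2$ to each tile: writing $A_w(s)$ for the area of $t_{w,s}$,
$$ \sqrt{A_w(s)} \;\ge\; (1-s)\sqrt{A_w(0)} + s\sqrt{A_w(1)} \;=\; \sqrt{A_w}, $$
with equality if and only if $t_{w,0}$ and $t_{w,1}$ are homothetic. Hence $A_w(s)\ge A_w$ for all $w$. Summing over $w$ gives $\sum_w A_w(s)\ge \mathrm{area}(R)$; but $T_s$ tiles $R$, so in fact $\sum_w A_w(s)=\mathrm{area}(R)$, which forces equality for every $w$. Therefore $t_{w,0}$ and $t_{w,1}$ are homothetic for each $w$, and since they have the same area and the same orientation they must be translates of each other, i.e.\ they have identical edge vectors.

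Finally, two combinatorially equivalent tilings of the same $R$ whose individual tiles have identical edge vectors represent the same point of $C_T(\sigma)$: starting from any boundary edge of $R$ (whose placement is fixed) the interior tile positions are forced by the combinatorial adjacencies and the common edge vectors, so $T_0=T_1$.

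The main obstacle is pinning down the equality case: the equal-area hypothesis, together with the fixed orientation, upgrades the Brunn--Minkowski homothety to an actual translation; without equal orientations one could have a tile and its $\pi$-rotate which are both homothetic and equal area but not translates, which is exactly why Theorem \ref{areathm} needs the orientation datum $\sigma$ in addition to the areas. A secondary issue is verifying that boundary effects (a tile degenerating to a segment or point) do not disturb the argument, but this is excluded by the nondegeneracy of $T$ and by the strict positivity of the prescribed areas.
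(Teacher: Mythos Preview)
Your proof is correct and follows essentially the same route as the paper's: interpolate linearly in the convex polytope $C_T(\sigma)$, observe that each tile's area can only go up along the interpolation, and use the fixed total area of $R$ to force equality and hence $T_0=T_1$. The only difference is in how the key inequality is justified: you invoke Brunn--Minkowski for the concavity of $\sqrt{A_w}$ along the segment, whereas the paper cites Thurston's result that the area form $q_t$ on $V_t$ has Lorentzian signature $(1,n-3)$, which yields the same strict concavity of $A_w$ along chords of the level hyperboloid; these are equivalent statements in this setting.
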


\begin{proof}
Suppose there are two combinatorially equivalent tilings $T_1,T_2$ of $R$ having the same tile orientations and the same tile areas.
Recall that $C_T(\sigma)$ is convex. 
Consider the tiling $sT_1+(1-s)T_2$ where $T_1,T_2\in C_T(\sigma)$ and $s\in[0,1]$.
If tiles $t_1,t_2\in C_t$ have the same area and orientation then a convex combination $st_1+(1-s)t_2$ has area which is a strictly concave function of $s$ (the line segment joining $t_1$ and $t_2$ lies on one side of the hyperboloid component $H$). In particular the area for $s\in(0,1)$ is larger than the area at the endpoints.
It is impossible that all tile areas increase, since the boundary is fixed.
Therefore the areas of corresponding tiles of $T_1$ and $T_2$ cannot be all equal, unless $T_1=T_2$. Thus the areas are uniquely determined by $R$ and $\sigma$.   
\end{proof}

\subsection{The Kasteleyn matrix as differential}\label{KasD}

For a line $ax+by=c$ let us define the \emph{intercept} to be the signed distance 
from the line to the origin: $\pm c/\sqrt{a^2+b^2}$, with the sign depending on which side the origin is on
(chosen arbitrarily).

Let $T$ be a tiling of a convex polygon $R$, whose underlying graph has quad faces.
In this case the space $C_T$ can be uniquely parameterized by the intercepts 
$\{i_b\}$ for $b\in B_{int}$, that is, $C_T\subset\R^{B_{int}}.$

Let us consider a slightly larger space: fix the outer polygon $R$ except for one edge $b_0$ of $R$, whose intercept we allow to vary. 
Let $B_0 = B_{int}\cup \{b_0\}$. As noted earlier $|B_0|=|W|$. We can redefine $C_T\subset\R^{B_0}$ to be this larger space.
Let $C_T(\sigma)\subset C_T$ be the corresponding subset with fixed tile orientations.

Let $\Psi:C_T\to\R^W$ be the map from intercepts $\{i_b\}_{b\in B_0}$ to tile areas $\{A_w\}_{w\in W}$. It is a quadratic function,
that is, each area is a quadratic function of the intercepts.  
From our definition of intercepts, the differential $D\Psi$ satisfies $\partial A_w/\partial i_b = \pm|K_{w,b}|,$
that is, plus or minus the length of the edge of face $w$ in direction $b$. 
The sign here depends on which side of the edge the face is on.
This matrix $D\Psi = (\pm|K_{w,b}|)$ is in fact gauge equivalent to $K$: $K_{w,b} = \pm e^{i\theta_b}(D\Psi)_{w,b} $
where $\theta_b$ is the direction of edge $b$ and the sign depends only on $b$. 

Thus we see that $K_{\Psi} := D\Psi$ is a Kasteleyn matrix for $\G$ which is also 
the differential of the map $\Psi$. 

The inverse $K_{\Psi}^{-1}$ is the differential
of the map $\Psi^{-1}$ from areas to intercepts.
Let $\G_0$ be the
graph obtained from $\G$ by removing all black boundary vertices except $b_0$. Dimer covers
of $\G_0$ are counted by $|\det K_\Psi|$.
We can use $K_{\Psi}^{-1}$ to relate the tiling geometry to the edge probabilities of a random dimer covering of $\G_0$, as follows.
The probability of edge $wb$ is $K(w,b)K^{-1}(b,w)$ (and this quantity is independent of choice of gauge), see \cite{Kenyon.localstats}. The fact that the probability
is positive means that $\frac{\partial A_w}{\partial i_b}$ and $\frac{\partial i_b}{\partial A_w}$ have the same sign. This implies that if we increase the area of a tile $t$ by a small amount, keeping the other tile areas fixed
then each edge of $t$ moves \emph{outward}, as if we were blowing air into the tile. 
The probability $p_{wb}$ is the relative proportion of area along $b$ which swept out as $A_w$ increases
(and $i_b$ moves outwards).

For multiple edges we can also draw a geometric conclusion. Given two edges $w_1b_1$ and $w_2b_2$ of $\G$,
suppose the signs of the intercepts are chosen so that $K_{w_1,b_1}, K_{w_2,b_2}>0$.
The probability of the pair of edges $w_1b_1$ and $w_2b_2$ is (see \cite{Kenyon.localstats})
$$K(w_1,b_1)K(w_2,b_2)\det\begin{pmatrix}K^{-1}(b_1,w_1)&K^{-1}(b_2,w_1)\\K^{-1}(b_1,w_2)&K^{-1}(b_2,w_2)\end{pmatrix}.$$
The fact that this is positive implies that if we vary only the
areas $A_{w_1},A_{w_2}$, keeping other areas fixed, the map from those areas to the intercepts $i_{b_1},i_{b_2}$ is orientation-preserving. This is because the above determinant is the Jacobian of the map from areas $A_{w_1},A_{w_2}$ to intercepts $i_{w_1},i_{w_2}$. Consequently, for example, if $dA_{w_1}+\beta dA_{w_2}$ is a perturbation of areas which fixes $i_{b_1}$, and $\beta>0$, then it increases $i_{b_2}$ (and if $\beta<0$ it decreases $i_{b_2}$).

Here is another application.

\begin{theorem}\label{diffthm} $\Psi$ is a diffeomorphism from $C_T(\sigma)$ onto its image.
\end{theorem}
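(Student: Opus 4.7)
The plan is to verify three conditions whose conjunction yields the diffeomorphism claim: (i) $\Psi$ is smooth, (ii) its differential $D\Psi$ is invertible at every point, and (iii) $\Psi$ is globally injective on $C_T(\sigma)$. Each of these is essentially supplied by earlier material in the paper, so the proof is more an assembly of existing ingredients than a fresh computation.

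For (i), observe that the intercepts $\{i_b\}_{b\in B_0}$ linearly determine the lines carrying the tile edges, so the edge lengths of each tile $t_w$ are affine functions of the intercepts, and the area $A_w$ is a homogeneous quadratic function of those edge lengths. Thus $\Psi$ is a polynomial (hence $C^\infty$) map from $\R^{B_0}$ to $\R^W$, with $|B_0|=|W|$ so that domain and target have the same dimension. For (ii), the computation just preceding the theorem identifies $D\Psi$ with the matrix $(\pm|K_{w,b}|)$, which is gauge-equivalent to the Kasteleyn matrix $K$ of $\G_0$. The maximality of $\mathrm{rank}(K)=|W|$ has already been established, via Kasteleyn's theorem together with the explicit dimer cover produced by rotating $R$ so that one boundary edge is horizontal and every tile acquires a unique lowest vertex. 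Hence $D\Psi$ is nondegenerate at every point of $C_T$. For (iii), Theorem \ref{areathm} gives exactly what we need: two combinatorially equivalent tilings in $C_T(\sigma)$ with identical tile-area vectors must coincide.

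With these three ingredients the conclusion follows by a standard inverse function theorem argument: a smooth injection between open subsets of $\R^n$ with everywhere invertible differential is a diffeomorphism onto its (necessarily open) image. I would interpret $C_T(\sigma)$ as its relative interior inside $\R^{B_0}$, which is open and convex since the orientation constraints $\mathrm{sign}(a_{w,i})=\sigma_w$ cut out an open convex cone inside each shape space $V_{t_w}$, and these constraints combine compatibly with the linear closing relations at each $b\in B_{int}$.

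I do not anticipate a genuine obstacle; the only delicate point is the interpretation of the domain at its boundary. Where one or more tiles degenerate to points and the combinatorics of $\G$ transitions between different orientation polytopes, $\Psi$ only extends continuously, and the Jacobian argument no longer guarantees that the inverse is smooth. This falls outside the scope of the stated claim, but is the natural place where a reader might want additional discussion, and would be worth flagging in the final write-up.
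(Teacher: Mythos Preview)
Your proposal is correct and follows essentially the same route as the paper: the paper's proof simply notes that $D\Psi=K_\Psi$ has nonzero determinant (hence $\Psi$ is a local diffeomorphism) and invokes Theorem~\ref{areathm} for injectivity. Your write-up supplies a bit more scaffolding (smoothness, openness of the domain, the inverse function theorem wrap-up), but the substance is identical.
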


\begin{proof}
The differential of the map from intercepts to areas is the Kasteleyn matrix $K_{\Psi}$ and thus has nonzero 
determinant. Thus $\Psi$ is locally a diffeomorphism. Injectivity follows from Theorem \ref{areathm} above.
\end{proof}

Since $C_T(\sigma)$ is an open polytope, we have:
\begin{corollary}\label{ballcor}
For a family of tilings with fixed tile orientations $\sigma$, and whose underlying graph $\G$ has quad faces, 
the set of possible areas $\Psi(C_T(\sigma))$ is homeomorphic to a ball of dimension $d=(\text{\# tiles}-1)$.
\end{corollary}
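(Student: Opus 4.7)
The plan is to deduce the corollary almost immediately from Theorem \ref{diffthm} together with a basic topological fact about open convex polytopes. By Theorem \ref{diffthm}, the restriction $\Psi|_{C_T(\sigma)}$ is a diffeomorphism onto its image, so $\Psi(C_T(\sigma))$ is homeomorphic to $C_T(\sigma)$ itself. It therefore suffices to show that $C_T(\sigma)$ is a nonempty open convex subset of $\R^d$ of full dimension $d=(\text{\# tiles})-1$, since any such set is homeomorphic to an open $d$-ball (in fact to $\R^d$).

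First I would verify convexity and openness. Earlier in the paper, $C_T$ was decomposed into the pieces $C_T(\sigma)$ indexed by $\sigma \in \{\pm 1\}^W$, each defined by the linear equations around interior black vertices (together with the boundary conditions) intersected with the open sign conditions $\mathrm{sgn}(a_i) = \sigma_i$ coming from the orientation of each tile. These sign conditions are open, and tile orientations change only when some coefficient $a_i$ crosses zero, i.e.\ when a tile degenerates to a point. Thus $C_T(\sigma)$ is an open convex subset of the affine subspace cut out by the equality constraints, and by assumption it contains the given tiling so is nonempty.

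Next I would pin down the dimension. With $R$ held fixed, $C_T$ is parameterized by the interior intercepts $\{i_b\}_{b\in B_{int}}$, and the Euler-characteristic argument given earlier shows $|B_{int}| = |W|-1 = (\text{\# tiles})-1 = d$. Since $C_T(\sigma)$ is a nonempty open subset of $C_T$, it inherits full dimension $d$. Invoking the elementary fact that every nonempty open convex subset of $\R^d$ is homeomorphic to $\R^d$, and hence to an open $d$-ball, finishes the argument when composed with the diffeomorphism of Theorem \ref{diffthm}.

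I do not anticipate any substantive obstacle; this really is a packaging of the differential-geometric content of Theorem \ref{diffthm} into topological language. The only bit of bookkeeping is to ensure we use the correct version of $C_T$: namely the one with $R$ held fixed (giving $|B_{int}|=|W|-1$ intercepts, matching the dimension stated in the corollary), rather than the enlarged version of Section \ref{KasD} where the boundary intercept $i_{b_0}$ is also allowed to vary (which would yield dimension $|W|$ with image transverse to the hyperplane $\sum_w A_w = \mathrm{area}(R)$).
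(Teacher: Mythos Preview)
Your proposal is correct and matches the paper's approach: the paper derives the corollary in one line from Theorem \ref{diffthm} and the observation that $C_T(\sigma)$ is an open polytope. Your added bookkeeping about which version of $C_T$ is meant (the fixed-$R$ one with $|B_{int}|=|W|-1$ intercepts, rather than the enlarged one of Section \ref{KasD}) is a useful clarification that the paper leaves implicit.
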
 

Without the hypothesis on quad faces the dimension of $C_T$ is smaller, 
so it is not clear what the corresponding statement should be.

\subsection{Homology tilings}\label{Homtilings}

For a convex tile $t$ recall that $V_t\cong\R^{n-2}$ is the vector space of closed polygonal paths 
with edges in the direction (up to sign) of the edges of $t$. We call such closed paths ``homology tiles''.
On $V_t$ let $q_t$ be the area form: the integral of $x\,dy$ over the closed path. We let $V_t^+,V_t^-$ be the two disjoint components of $V_t$ 
where $q_t>0$. These are interiors of cones; each $V_t^{\pm}$ contains one of the two components of $C_t$. 

Let $T$ be a convex tiling of a convex polygon $R$, whose underlying graph has quad faces. 
We can parameterize  $C_T=C_T(R)$ using the intercepts $i_b\in\R$.
In order for a tiling to exist the $i_b$ must satisfy certain inequalities.
If the $i_b$ fail to satisfy those inequalities we say we have a \emph{homology} tiling.
That is, the space of homology tilings is just the space $\R^{B_{int}}$ allowing
the intercepts to take arbitrary real values.

A homology tiling can be characterized in terms of winding numbers. For a point $p$ in general position inside $R$, the sum of winding
numbers of the homology tiles $t$ around $p$ is $1$, and for a point $p$ in general position outside $R$, the sum of winding numbers is zero.

We define $V_T\subset\R^{B_{int}}$ to be those homology tilings whose tiles have positive area.
On $\R^{B_{int}}$, a choice of orientation $\sigma$ for the tiles cuts out a convex region $V_T(\sigma)\subset V_T$, 
where individual homology tiles have fixed orientation. 

\begin{thm}\label{homologytilingthm} 
Let $T$ be a convex tiling of a convex polygon $R$, whose underlying graph has quad faces. 
Let $(A_1,\dots,A_{|W|})$ be a collection of positive reals summing to the area of $R$. Then there 
is a unique combinatorially equivalent homology tiling of $R$
such that tile $w$ has area $A_w$ and
the same tile orientation as the original tile. 
\end{thm}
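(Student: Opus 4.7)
The plan is to parameterize the space of homology tilings of $R$ combinatorially equivalent to $T$ by the interior intercepts $i \in \R^{B_{int}}$ (using the quad-face hypothesis, as in Section \ref{KasD}) and to show that the area map $\Psi_\sigma : V_T(\sigma) \to H_+$, restricted to the convex open region of fixed-orientation homology tilings, is a diffeomorphism onto the open simplex $H_+ = \{A \in \R^W_{>0} : \sum_w A_w = \mathrm{Area}(R)\}$. Both spaces have dimension $|W|-1$. The Jacobian of $\Psi_\sigma$ is, up to the sign gauge of Section \ref{KasD}, the Kasteleyn matrix $K_\Psi$ with the $b_0$ column removed: a $|W| \times (|W|-1)$ matrix of full rank $|W|-1$ whose left kernel is spanned by the all-ones vector (since $\sum_w A_w \equiv \mathrm{Area}(R)$ throughout $V_T(\sigma)$). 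Hence $\Psi_\sigma$ is a local diffeomorphism into $H_+$.

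For injectivity I would repeat the proof of Theorem \ref{areathm} verbatim. If two points $i_1, i_2 \in V_T(\sigma)$ have the same image $(A_w)_w$, then for each $w$ the tile vectors $v_w(i_1), v_w(i_2)$ lie in the common forward Lorentzian cone $V_{t_w}^{\sigma_w}$ on the single hyperboloid $\{q_{t_w} = A_w\}$; by the reverse Cauchy--Schwarz inequality for timelike vectors in signature $(1, n_w - 3)$, $q_{t_w}$ is strictly concave along the chord between them, so the midpoint area strictly exceeds $A_w$ unless $v_w(i_1) = v_w(i_2)$. Summing the strict midpoint inequality over $w$ contradicts the constancy of $\sum_w A_w$ unless $i_1 = i_2$.

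The hard part is then to show that $V_T(\sigma)$ is bounded, which I would do by ruling out unbounded rays in this convex open set. Along a candidate ray $i(t) = i_0 + t\hat i^\infty$ in $V_T(\sigma)$ for $t \geq 0$, each tile vector $v_w(i(t))$ is affine in $t$, so $A_w(t)$ is a quadratic $a_w + b_w t + c_w t^2$ with $c_w \geq 0$ (the homogeneous direction of the tile vector lies in the closed recession cone $\overline{V_{t_w}^{\sigma_w}}$ where $q_{t_w} \geq 0$). The identity $\sum_w A_w(t) \equiv \mathrm{Area}(R)$ forces $\sum_w c_w = 0$, whence every $c_w = 0$; then $A_w(t) = a_w + b_w t$ is affine, and the constraints $A_w(t) > 0$ for all $w$ (to stay in $V_T(\sigma)$) together with $\sum_w A_w(t) = \mathrm{Area}(R)$ trap $A_w(t)$ in the interval $(0, \mathrm{Area}(R))$ for all $t \geq 0$, forcing $b_w = 0$. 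So every $A_w$ is constant along the ray, producing a one-parameter family of distinct preimages of a single area vector and contradicting the injectivity established in paragraph two. Since a convex open subset of $\R^n$ with trivial recession cone is bounded, we conclude $V_T(\sigma)$ is bounded.

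Surjectivity then follows from properness. For any compact $K \subset H_+$, the preimage $\Psi_\sigma^{-1}(K)$ is closed in $V_T(\sigma)$ and bounded in $\R^{B_{int}}$, and no limit point can lie on $\partial V_T(\sigma)$ because there some tile has $q_{t_w}(v_w) = 0$ and hence $A_w = 0$, contradicting $K \subset H_+$. An injective proper local diffeomorphism between connected manifolds of equal dimension is a diffeomorphism onto a clopen subset; since $\Psi_\sigma(V_T(\sigma))$ is nonempty (it contains the image of the initial tiling) and $H_+$ is connected, we conclude $\Psi_\sigma(V_T(\sigma)) = H_+$, establishing existence and uniqueness simultaneously. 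The non-routine step is the boundedness argument in paragraph three: it is precisely the interplay between the Lorentzian recession-cone positivity of $q_{t_w}$, the area-sum identity, and the injectivity from paragraph two that rules out the degenerate one-parameter families; everything else is a direct application of the Kasteleyn/Jacobian setup of Section \ref{KasD} together with a standard local-to-global topological argument.
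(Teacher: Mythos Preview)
Your approach is correct and genuinely different from the paper's. The paper proceeds variationally: it maximizes the concave functional $Q(i)=\sum_w A_w\log q_w(i)$ on $V_T(\sigma)$ (first showing $V_T(\sigma)$ bounded so the maximum is attained), and then identifies the unique critical point with the desired homology tiling by observing that $(A_w/q_w)_w$ lies in the left kernel of $K|_{B_{int}}$, which generically is $\mathrm{span}(1_W)$; a limiting argument over generic area vectors handles the residual hypersurface. You instead run a properness argument: injectivity via Lorentzian concavity of each $q_{t_w}$ on its forward cone, boundedness of $V_T(\sigma)$, and then surjectivity because an injective proper map between manifolds of equal dimension has clopen image. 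Your boundedness proof is in fact more thorough than the paper's one-sentence version, since you explicitly treat the case where every leading quadratic coefficient $c_w$ vanishes by feeding the resulting constant-area ray back into the injectivity of paragraph~2.

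One small gap: paragraph~1 asserts that $D\Psi_\sigma$ has full rank $|W|-1$ \emph{everywhere} on $V_T(\sigma)$, i.e.\ that the square matrix $K_\Psi$ is nonsingular there. Section~\ref{KasD} justifies this only on $C_T(\sigma)$, where all edge weights are positive and Kasteleyn's theorem applies; for a general homology tiling the entries of $K_\Psi$ may have mixed signs and $\det K_\Psi$ can vanish on a hypersurface---the paper's own proof acknowledges exactly this and works around it with its genericity-plus-limit step. Fortunately your argument does not actually need the local-diffeomorphism claim: continuity and injectivity already force the image to be open by Brouwer's invariance of domain, and properness makes it closed, so the clopen conclusion of paragraph~4 survives with ``local diffeomorphism'' weakened to ``continuous map''. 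With that one-word adjustment your proof is complete, and it has the pleasant feature of avoiding the paper's genericity argument entirely.
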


\begin{proof}
$V_T(\sigma)$ is a convex set cut out by a finite number of cones. 
If $V_T(\sigma)$ is nonempty, it is bounded:
if it were unbounded then by going to $\infty$ along a line parallel to the boundary, some tile area would tend to $\infty$, contradicting the property
that the sum of the areas equals the area of $R$. 

For fixed positive $\{A_w\}$, the function $Q(\{i_b\}) = \sum_w A_w\log q_w$ is a
concave and analytic function on $V_T(\sigma)$, and tends to $-\infty$ on the boundary, and so has a unique critical point.
At the critical point
$$\sum_w\frac{A_w}{q_w}\frac{\partial q_w}{\partial i_b}=0,$$
that is 
$$\sum_w\frac{A_w}{q_w}K_{w,b}=0,$$
or
$$(\frac{A_w}{q_w})\in \ker K^*,$$
the left kernel of $K$ restricted to $B_{int}$. 
But recalling that  $\ker K^*$ contains $1_W$, if the kernel is of dimension $1$ then
$q_w=sA_w$ (for a global scale factor $s$) and we are done.

Generically in $\R^{B_{int}}$, $\ker K^*$ is of dimension $1$: this is true
for the original tiling and nearby, so true on a Zariski dense set.
So $\ker K^*$ is of dimension strictly larger than $1$ only on a hypersurface
$X$ defined by $\det K_{w_0}=0$, the determinant of $K$ when we remove row $w_0$
(since $1_W$ is in the left kernel of $K$, all maximal minors are equal,
so $X$ has no dependence on the choice of $w_0$.)
For generic areas $\ker K^*$ is thus one dimensional. The proof now follows by
compactness of $V_T$, taking a limit of solutions with generic areas.
\end{proof}

\begin{conjecture}\label{2^n} Let $T$ be a convex tiling of a convex polygon $R$, with the property that no two internal edges are parallel, and the underlying graph $\G$ has quad faces. 
Then for any choice of tile orientations $\sigma\in\{1,-1\}^W$ there exists a unique homology tiling of $R$ or $-R$
such that tile $w$ has positive area and orientation $\sigma_w$. 
\end{conjecture}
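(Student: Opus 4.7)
The plan is to extend the variational proof of Theorem~\ref{homologytilingthm} from the reference orientation $\sigma_0$ to an arbitrary $\sigma\in\{\pm 1\}^W$: given positive areas $\{A_w\}$ satisfying $\sum_w\sigma_w A_w=\pm\,\mathrm{area}(R)$ (the sign determining whether we tile $R$ or $-R$), the goal is to produce a unique homology tiling in $V_T(\sigma)\subset\R^{B_{int}}$ with these areas and orientation $\sigma$. Here $V_T(\sigma)$ consists of the interior-intercept configurations for which each tile $w$ lies in the forward Lorentz cone $V_{t_w}^{\sigma_w}$; it is open and convex, because each such cone is convex and the map from intercepts to the local shape space $V_{t_w}$ is affine.

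Uniqueness adapts Theorem~\ref{areathm} directly: if $T_1\ne T_2$ in $V_T(\sigma)$ realize the same areas, the segment $sT_1+(1-s)T_2$ stays in $V_T(\sigma)$, and the reverse Cauchy--Schwarz inequality on each forward Lorentzian cone (standard for the signature $(1,n-3)$ form $q_{t_w}$) makes each tile area strictly concave along the segment, strictly exceeding its common endpoint value on $(0,1)$; this contradicts the linear boundary constraint $\sum_w\sigma_w A_w=\pm\,\mathrm{area}(R)$. For existence I would consider the functional
\[
Q^\sigma(\{i_b\})\;:=\;\sum_w \sigma_w\,A_w\,\log q_w
\]
on $V_T(\sigma)$. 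Its critical equation is $\sum_w(\sigma_w A_w/q_w)\,(D\Psi)_{w,b}=0$ for $b\in B_{int}$; equivalently $(\sigma_w A_w/q_w)_{w\in W}$ lies in the left kernel of $D\Psi$ restricted to the $B_{int}$-columns. Since for orientation $\sigma$ each row $w$ of $D\Psi$ differs from its $\sigma_0$-counterpart by the sign $\sigma_w$, and since for $\sigma_0$ this left kernel is spanned by $1_W$, for $\sigma$ it is spanned by $\sigma$ itself. Hence $q_w=A_w/c$ for a single scalar $c$, whose sign equals $\mathrm{sgn}\sum_w\sigma_w A_w$, giving a tiling of $R$ if $c>0$ and of $-R$ if $c<0$.

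The main obstacle is that $Q^\sigma$ is not globally concave when $\sigma$ has mixed signs: each $\log q_w$ is concave on its forward Lorentzian cone, but the mixed-sign coefficients $\sigma_w A_w$ make $Q^\sigma$ saddle-like, so the concave-function argument of Theorem~\ref{homologytilingthm} breaks. I would replace it with a topological-degree argument: by the analogue of Theorem~\ref{diffthm} on $V_T(\sigma)$, the area map
\[
\Psi\colon V_T(\sigma)\longrightarrow \{(A_w)\in\R_{>0}^W:\sum\nolimits_w\sigma_w A_w=\pm\,\mathrm{area}(R)\}
\]
is a local diffeomorphism, and it is injective by the uniqueness step, so only properness remains. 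This is where the hypothesis that no two interior edges are parallel must enter, supplying the genericity needed for a clean boundary analysis. As a sequence in $V_T(\sigma)$ approaches $\partial V_T(\sigma)$ (some $q_w\to 0$, so tile $w$ degenerates to the boundary of its Lorentzian cone), the no-parallel hypothesis should force that degeneracy to propagate through $\G$, driving at least one other tile area to $0$ or $\infty$ and blocking convergence of areas to a strictly positive target. A complementary route is to connect $\sigma_0$ (covered by Theorem~\ref{homologytilingthm}) to a general $\sigma$ by flipping one entry of $\sigma$ at a time, each flip realized by a continuous deformation across a codimension-one degeneracy $q_w=0$; the no-parallel hypothesis keeps these degeneracies generic, so the unique tiling extends uniquely to the other side of each flip. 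Carrying out either the boundary-rigidity analysis or the monodromy-across-flips analysis is where the real work lies.
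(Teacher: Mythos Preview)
The statement you are attempting to prove is labeled a \emph{Conjecture} in the paper, and the paper does not supply a proof; it only gives the illustrative Figure~\ref{all8} and a few remarks. So there is no ``paper's own proof'' to compare against, and your proposal is necessarily an attack on an open problem rather than a reconstruction.

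That said, your outline contains a substantive error that misidentifies where the difficulty actually lies. The signed-area identity for homology tilings is $\sum_w q_w=\mathrm{area}(R)$ for \emph{every} point of $\R^{B_{int}}$, independent of $\sigma$: this is exactly the statement $1_W\in\ker K^*$ used in the proof of Theorem~\ref{homologytilingthm}. Your constraint $\sum_w\sigma_w A_w=\pm\,\mathrm{area}(R)$ is therefore incorrect, and so is the claim that the rows of $D\Psi$ pick up the sign $\sigma_w$ (the Jacobian $D\Psi$ is a function of the intercepts alone; its left kernel over $B_{int}$ is spanned by $1_W$ at any nondegenerate point, not by $\sigma$). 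Consequently your functional $Q^\sigma=\sum_w\sigma_wA_w\log q_w$ is the wrong object: its critical equation forces $\sigma_wA_w/q_w$ to be constant in $w$, which is impossible with $q_w,A_w>0$ once $\sigma$ has mixed signs. The correct functional is the \emph{unmodified} $Q=\sum_w A_w\log q_w$ of Theorem~\ref{homologytilingthm}, and since each $\log q_w$ is concave on the forward Lorentzian cone $V_{t_w}^{\sigma_w}$ (because $\sqrt{q_w}$ is concave there) and the coefficients $A_w$ are positive, $Q$ is genuinely concave on $V_T(\sigma)$ for \emph{every} $\sigma$. There is no saddle; the entire variational argument of Theorem~\ref{homologytilingthm}---boundedness of $V_T(\sigma)$ via $0<q_w\le\mathrm{area}(R)$, blow-up of $Q$ at the boundary, uniqueness of the maximizer---goes through verbatim once one knows $V_T(\sigma)\neq\emptyset$.

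So the conjecture reduces to the purely qualitative statement that $V_T(\sigma)$ (or its $-R$ counterpart) is nonempty for every $\sigma$ when no two interior edges are parallel. Your properness and flip-monodromy ideas are aimed at this, but as written they presuppose a point in $V_T(\sigma)$ to start from; the genuine obstacle is producing such a point in the first place. This nonemptiness is precisely what the paper leaves open.
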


In Figure \ref{all8} see $8$ out of the $16$ possible solutions for a given set of areas (the other $8$ are obtained by rotation of these by $\pi$). 

\begin{figure}
\begin{center}\includegraphics[width=5.in]{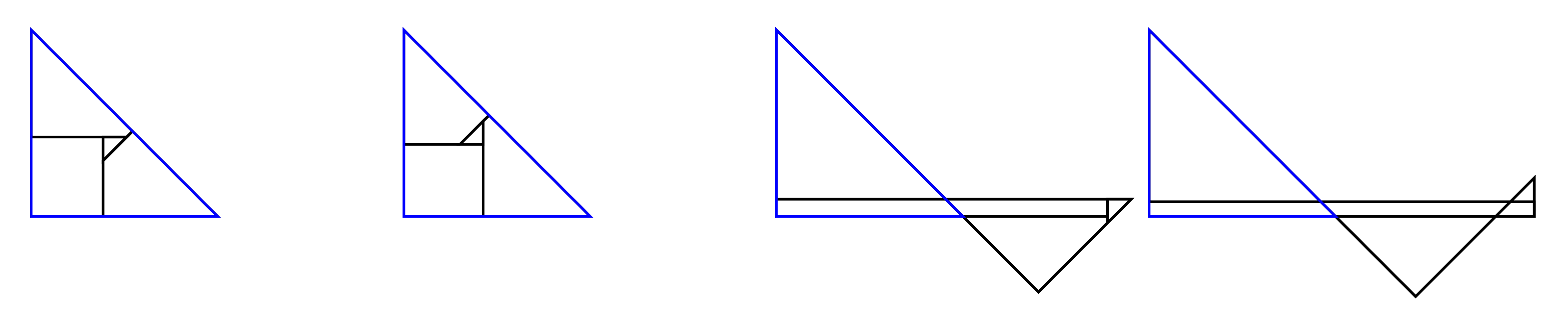}\\\includegraphics[width=5.in]{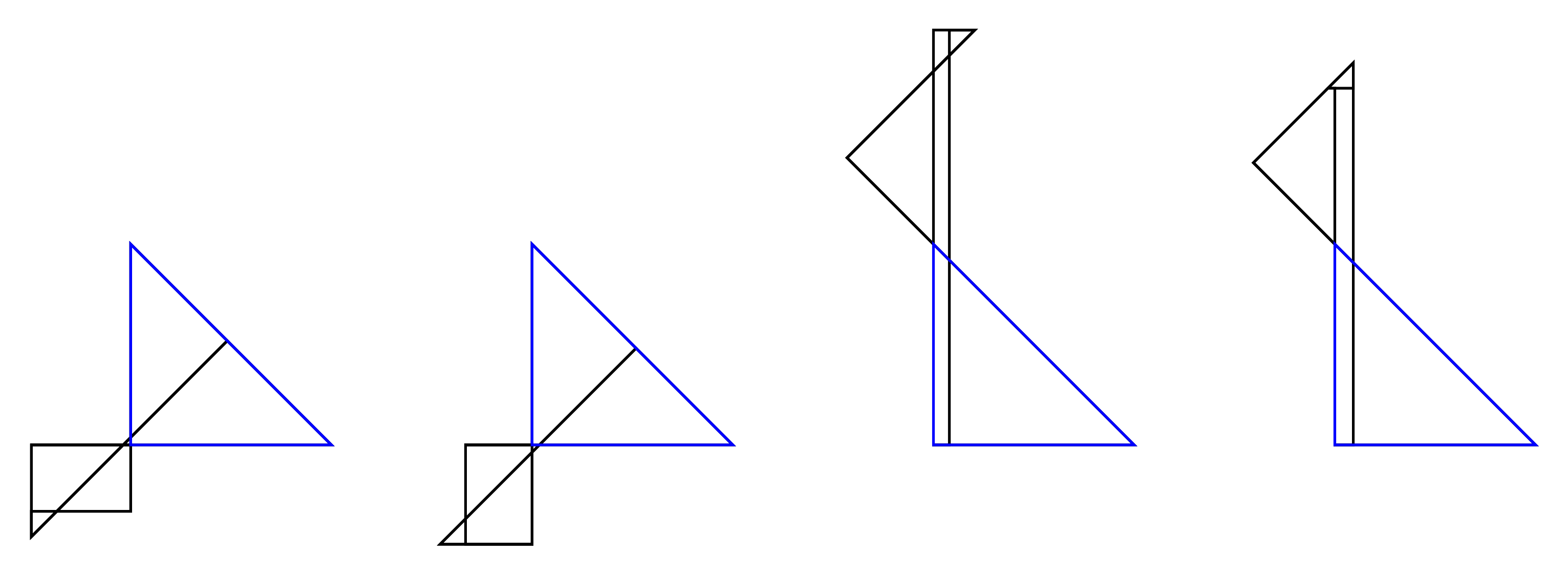}\end{center}
\caption{\label{all8}The $8$ homology tilings of a triangle $R$ (in blue) with areas equal to those of the first tiling.}
\end{figure}

If some edges are parallel, then apparently some of these solutions ``go to infinity" and we have fewer than $2^{W}$ tilings.
For example in the rectangle tiling case, we get significantly fewer, as discussed previously and in \cite{AK}. 

If all tiles are quadrilaterals,
$V_T$ is defined by a hyperplane arrangement; in this case bounded regions in the complement of the
hyperplanes correspond bijectively to orientations for which there is a homology tiling.

\end{document}